\documentclass[a4paper,10pt,leqno]{amsart}
\usepackage[english]{babel}
\usepackage{verbatim}
\usepackage[leqno]{amsmath}
\usepackage{amssymb,pb-diagram,pb-xy}
\usepackage[arrow,curve,matrix,tips,2cell]{xy}
\usepackage{amscd}
\usepackage[mathscr]{euscript}
\usepackage{hyperref}
\usepackage{pb-diagram}
\usepackage{mathtools}
\theoremstyle{definition}
\numberwithin{equation}{section}
\newtheorem{teor}{Theorem}[section]
\newtheorem{defin}[teor]{Definition}
\newtheorem{ejem}[teor]{Example}
\newtheorem{lema}[teor]{Lemma}

\newtheorem{prop}[teor]{Proposition}

\newtheorem{remark}[teor]{Remark}
\newtheorem{condition}[teor]{Condition}
\def\spinc{\text{Spin}^c}
\def\C{\mathbb{C}}

\def\Z{\mathbb{Z}}
\def\N{\mathbb{N}}
\def\H{\mathscr{H}}

\def\F{\mathfrak{F}}
\def\c{\mathfrak{C}}
\def\ca{\mathfrak{C}(X/A,G)}

\def\can{\mathfrak{C}^{n+1}(X/A,G)-\mathfrak{C}^{n}(X/A,G)}
\def\cn{\c^n(X/A,G)}
\def\cnn{\c^{n+1}(X/A,G)}
\def\k{\underline{\widetilde{k}}}
\def\ku{\widetilde{k}}

\def\hu{\mathbb{H}}
\def\s{\mathscr{S}}
\def\hu{\mathbb{H}}
\def\S{\mathfrak{S}}
\def\E{\mathbb{E}}
\def\D{\mathbb{D}}
\newcommand{\Ker}{\mbox{ker}}
\newcommand{\Hom}{\mbox{Hom}}
\newcommand{\Mat}{\mbox{M}}
\newcommand{\End}{\mbox{End}}
\newcommand{\FR}{\mbox{FR}}
\newcommand{\rank}{\mbox{rank}}
\newcounter{commentcounter}

\begin{document}
\title{A configuration space for equivariant connective K-homology}
\author{Mario Vel\'asquez}\thanks{The author was partially supported by Faculty of Sciences of Pontificia Universidad Javeriana through the project \emph{Aplicaciones de la K-teor\'ia en teor\'ia del \'indice y las conjeturas de isomorfismo} with ID 6552.
}
\address{ Departamento de Matem\'aticas
\\Pontificia Universidad Javeriana\\Cra. 7 No. 43-82 - Edificio Carlos Ortíz 5to piso\\ Bogot\'a D.C, Colombia}
\email{mavelasquezm@gmail.com}
\urladdr{https://sites.google.com/site/mavelasquezm/}

\subjclass{Primary 19L41, 19L47}
\date{\today}

\keywords{Equivariant connective K-homology, Configuration spaces, Equivariant K-theory, Equivariant homology theories}

\begin{abstract}
Following ideas of Graeme Segal, we construct an equivariant configuration space that
is a model of equivariant connective K-homology spectrum for finite groups, as a consequence we obtain an induction structure for equivariant connective K-homology. We describe explicitly the  homology with complex coefficients for the fixed points  of  this configuration space as a Hopf algebra.
\end{abstract}
\maketitle
\section{Introduction}\label{section0}
The purpose of this paper is to give a configuration space description of \textit{equivariant connective K-homology} (Proposition \ref{connectiveversion}). We describe the 
homology of the fixed point space of these configuration spaces, in terms of certain Hopf algebras studied initially by Segal in 
\cite{segal1996} and ge\-ne\-ra\-li\-zed by Wang in \cite{wang2000}. We follow ideas of Graeme Segal, and most of the results and proofs 
obtained here generalize results contained in \cite{segal1977} and 
\cite{segal1996} to the equivariant context. 
Our results answer a question posed by Wang in \cite{wang2000}. Namely, let $(M,m_0)$ be a based $G$-$\spinc$-manifold. Wang asks about the possibility to relate the Hopf algebra $$\F_G^q(M)=\bigoplus_{n\geq0}q^nK_{G\sim\mathfrak{S}_n}^*(M^n)\otimes\C,$$
to the homology of some configuration space $\c(M,m_0,G)$ thus generalizing Segal's work. Finally, as a consequence, we obtain a new model for the
equivariant  connective K-theory spectrum. The results in this paper are part of the PhD thesis of the author \cite{ve2012}.

The first appearance of configuration spaces in algebraic topology  is possibly in the Dold-Thom Theorem  in \cite{dt58}.  In this paper the authors consider the \textit{infinite symmetric product} of a based CW-complex $(X,x_0)$ and establish a natural isomorphism from its homotopy groups  onto the
\textit{reduced cellular homology} groups of $(X,x_0)$. More precisely:
\begin{defin}Let $(X,x_0)$ be a based CW-complex. Consider the natural action of the symmetric group $\mathfrak{S}_n$ over $X^n$. The orbit space of this action 
$$SP^n(X)=X^n/\mathfrak{S}_n$$
provided with the quotient topology is called the \textit{n-th symmetric product} of $X$. 
We can include $SP^n(X)$ in $SP^{n+1}(X)$ in the following way
\begin{align*}
SP^n(X)&\rightarrow SP^{n+1}(X)\\
[x_1,\ldots,x_n]&\mapsto[x_0,x_1,\ldots,x_n],
\end{align*}
 Taking colimits over these inclusions we define 
$$SP^\infty(X)=\varinjlim_n SP^n(X),$$
with the colimit topology. $SP^\infty(X)$ is called the infinite symmetric product of $X$.
\end{defin}
\begin{condition}\label{conditionconf}
The topology of $SP^\infty(X)$ as a configuration space is determined by the following properties:
  \begin{enumerate}
    \item If two elements converge to a third element, the label in the limit will be the sum of the labels in the initial 
    points.
    \item If a sequence converges to $x_0$, then the point disappears.
  \end{enumerate}  
\end{condition}
Let $fCW_0$ be the category of based finite CW-complexes and $\Z-ab$ be the category of $\Z$-graded abelian groups. 
  \begin{teor} [Dold-Thom]\label{doldthom}
    There is a natural equivalence between 
    $\pi_*(SP^\infty(-))$ and $\widetilde{H}_*(-)$, where $\widetilde{H}_*(-)$ denotes reduced homology with integer coefficients.
  \end{teor}
 It is an interesting problem to give a description similar to Theorem \ref{doldthom} for generalized homology theories (see for example \cite{shimakawa}). In the current paper we are mainly interested in K-homology. For this case it is possible to assign a configuration space, but we have to consider the \textit{connective} version of K-homology.
  \begin{defin}[Pg. 205 in \cite{ad95}]\label{adamsconnective}
Given a (generalized) reduced homology theory $\mathscr{H}_*$ defined on the category $fCW_0$, one can associate  another homology theory 
$\mathit{h}_*$ such that:
  \begin{enumerate}
    \item There is a natural transformation $$c:\mathit{h}_*\rightarrow\mathscr{H}_*$$such that $c$ is an isomorphism when we evaluate in 
    $(S^0,0)=(\{0,1\}, 0)$ on positive indices.
    \item For every $(X,x_0)\in fCW_0$, and for every $n<0$ we have $\mathit{h}_n(X,x_0)=0$.
  \end{enumerate}
The functor $\mathit{h}_*$ is uniquely determined by these conditions and  is called the \textit{connective} homology theory associated to $\mathscr{H}_*$. In the case of K-homology we denote by $k_*$ the functor \textit{connective K-homology}.
\end{defin}
In \cite{segal1977} Segal  constructs a functor 
$\c(-)$ for connective K-homology analogue to $SP^\infty(-)$.
  \begin{defin}
 Let $(X,x_0)$ be  a pathwise connected, based, finite CW-complex and let $C_0(X)$ be  the C$^*$-algebra of complex-valued continuous functions defined over  $X$ that vanish on the base 
  point $x_0$. Consider the space of $*$-homomorphisms $$\c(X,x_0)=\bigcup_{n\geq0}\Hom^*(C_0(X),\Mat_{n\times n}(\C)),$$with the weak topology, 
  where the 
  union is taken considering the inclusions 
  \begin{align*}\Mat_{n\times n}(\C)&\longrightarrow \Mat_{(n+1)\times(n+1)}(\C)\\
  A&\longmapsto \begin{pmatrix} A && 0\\0 && 0\end{pmatrix}.\end{align*} As $X$ is pathwise connected, $\c(X,x_0)$ is also pathwise connected and therefore its homotopy groups do not depend of the base point $x_0$. We will denote $\c(X,x_0)$ simply by $\c(X)$. We will see below that 
  \begin{equation}\label{nonconnected}\c(X)\simeq\Omega\c(\Sigma X).\end{equation}If $X$ is non pathwise connected we take \ref{nonconnected} as the defintion of $\c(X)$. 
  \end{defin}
  The space $\c(X)$ has a description as a configuration space. If $X$ is connected, elements in $\c(X)$ can be viewed as 
  finite subsets  \begin{displaymath}\{x_1,\ldots,x_n\}\subseteq X-\{x_0\},\end{displaymath} where 
  the $x_i$ are labeled by mutually orthogonal non-zero finite dimensional vector subspace $V_i$ of $\C^\infty$,
  and the topology has the following properties (compare with Condition \ref{conditionconf}).
  \begin{enumerate}
    \item If two points converge to the same points, the label in the limit will be the limit of the direct sum of the labels 
    of the initial points.
    \item If a sequence converges to $x_0$, then the labels converge to 0 (i.e the point disappears).
  \end{enumerate}
A more precise description of these conditions is given in Remark \ref{confdescription}.

Segal obtained a Dold-Thom-theorem for connective K-homology in the following way.
  \begin{teor}[Prop. 1.1 in \cite{segal1977}]\label{main1} Let $(X,x_0)$ be a based finite CW-complex, and denote by $\widetilde{K}$ the reduced K-homology.  There is a natural map 
  \begin{align*}\pi_n(\c(X))\xrightarrow{\ p\ } \widetilde{K}_n(X) \hspace{0.5cm} \text{for }n\geq0\end{align*} such   
  that 
    \begin{enumerate}
      \item This application is an isomorphism when $X=S^0$.
      \item The functor $\pi_*(\c(-))$ is a reduced homology theory in the category of based finite CW-complexes, and the map $p$ is a natural 
      transformation between reduced homology theories.
    \end{enumerate}
  \end{teor}
  By Definition \ref{adamsconnective}, it follows that the functor $\pi_*(\c(-))$ is naturally equivalent to reduced connective K-homology. In this paper, we use ideas of non commutative topology, specifically Kasparov KK-theory, to explain a new proof and a generalization of Theorem \ref{main1} to an equivariant context. More precisely, we have
\begin{teor}Let $G$ be a finite group and $(X,A)$ be a finite $G$-CW-pair. There is a \textit{configuration space} $\c(X/A,G)$, with a continuous $G$-action and a natural map
\begin{align*}\pi_n^G(\c(X/A,G))\longrightarrow {K}_n^G(X,A) \hspace{0.5cm} \text{for }n\geq0\end{align*}
satisfying
  \begin{enumerate}
      \item For every subgroup $H$  of $G$ this map is an isomorphism when $$(X,A)=(G/H,\emptyset)$$ with the natural $G$-action.
      \item The functor  $\pi_*^G(\c(-,G))$ is an equivariant homology theory defined on the category of based finite $G$-CW-pairs.
    \end{enumerate}
\end{teor}
For a precise definition of $\c(X,G)$ see Definition \ref{definconf}.

Now, let us describe the other topic developed in this paper. Segal in \cite{segal1996} 
studies the $\Z\times\Z/2\Z$-graded Hopf algebra
$$\F^q(X)=\bigoplus_{n\geq0}q^n K_{\mathfrak{S_n}}^*(X^n)\otimes\C,$$where $q$ is a formal variable ($q$ gives the grading) and where 
$K^*_G(-)=K^0_G(-)\oplus K^1_G(-)$. The product is defined using the induction functor from 
$(\mathfrak{S}_n\times \mathfrak{S}_m)$-e\-qui\-va\-riant vector bundles to $\mathfrak{S}_{n+m}$-e\-qui\-va\-riant vector bundles, and the coproduct is defined using the corresponding restriction functor. Considering a connected even dimensional $\spinc$-manifold, Segal found a relation between the Hopf algebra 
$\F^q(X)$ and the homology of the configuration space $\c(X)$. Note that $H_*(\c(X);\C)$ is endowed with the Hopf algebra structure induced by the Hopf space structure in $\c(X)$ given by `putting together' the configurations. More precisely Segal proves in \cite{segal1996} the following theorem:
 \begin{teor}\label{main2}Let $M$ be a connected even dimensional $\spinc$-manifold. There is a natural isomorphism of $\Z\times\Z/2\Z$-graded Hopf algebras
  $$H_*(\c(M);\C)\cong\widehat{\mathfrak{F}^q(M)},$$where the completion is taken over the augmentation ideal.
  \end{teor}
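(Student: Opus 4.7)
My plan is to exploit the natural filtration of $\c(X)$ by total label-dimension and match each filtration quotient with the summand $q^n K_{S_n}(X^n)\otimes\C$ of $\F^q(X)$. First, superposition of configurations (union of supports, direct sum of labels at coinciding points) makes $\c(X)$ an abelian topological monoid, so that $H_*(\c(X);\C)$ is a commutative graded Hopf algebra; on the $\F^q(X)$ side, induction along $S_n\times S_m\hookrightarrow S_{n+m}$ together with its dual restriction provides the Hopf structure. Setting $\c^n(X)\subset\c(X)$ to be the configurations of total dimension at most $n$, the multiplicative relation $\c^n(X)\cdot\c^m(X)\subset\c^{n+m}(X)$ realizes the $n$-grading geometrically.

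Next I would analyze the strata. A point of $\c^n(X)\setminus\c^{n-1}(X)$ is given by distinct points $x_1,\dots,x_k\in X$ with nonzero labels of dimensions $\lambda_1,\dots,\lambda_k$ forming a partition $\lambda\vdash n$, up to permutations. The stratum of type $\lambda$ deformation retracts onto a Borel-type construction
\[
\bigl(\mathrm{Conf}_k(X)\times BU(\lambda_1)\times\cdots\times BU(\lambda_k)\bigr)/\mathrm{Aut}(\lambda),
\]
whose homology is an $S_n$-equivariant cohomology of $X^n$ twisted by a Thom class from the $BU(\lambda_i)$-factors. Summing over partitions gives a candidate for the $n$-th associated graded of $H_*(\c(X);\C)$.

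The crux is to identify this associated graded with $K_{S_n}(X^n)\otimes\C$. This uses two complementary tools in the presence of the hypothesis that $X$ is an even-dimensional $\spinc$-manifold: equivariant $\spinc$-Poincar\'e duality on $X^n$, which interchanges K-theory and K-homology without parity shift, and the delocalized equivariant Chern character (Baum-Connes, Brylinski-Nistor), which decomposes $K_{S_n}(X^n)\otimes\C$ as a direct sum, indexed by conjugacy classes of $S_n$, of rational cohomologies of the corresponding fixed-point subspaces. Matching conjugacy classes in $S_n$ with their cycle-type partitions $\lambda\vdash n$ pairs each delocalized summand with the partition-stratum of $\c^n(X)$, while the Thom shifts from $BU(\lambda_i)$ supply the required degree corrections.

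Finally, I would verify that the assembled isomorphism is one of Hopf algebras. The product $\c^n(X)\times\c^m(X)\to\c^{n+m}(X)$ restricts on strata to concatenation of configurations, which on the K-theoretic side is exactly external product followed by induction along $S_n\times S_m\hookrightarrow S_{n+m}$, i.e.\ the product on $\F^q(X)$; the coproducts are matched by a parallel diagonal check. Because $\c(X)$ admits configurations of unbounded total dimension, the Hopf algebra obtained on the left is naturally complete in the filtration topology, which is exactly the completion of $\F^q(X)$ at the augmentation ideal $\bigoplus_{n\geq 1}q^nK_{S_n}(X^n)\otimes\C$. The principal obstacle is the middle step: one must match the partition-indexed stratum description with the conjugacy-class decomposition of $K_{S_n}(X^n)\otimes\C$ so that $\spinc$-Poincar\'e duality, the Thom isomorphism and the induction products all align coherently. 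This is where the even-dimensional $\spinc$ hypothesis is indispensable and where the bulk of the technical work lies.
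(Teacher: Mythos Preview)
Your approach is genuinely different from the paper's and considerably more laborious. The paper never analyzes the stratification of $\c(X)$ by total label-dimension at all. Instead it argues on each side \emph{separately} that the Hopf algebra in question is a (completed) free supersymmetric algebra on the same graded vector space, and then simply matches generators. On the topological side, $\c(X)$ is a path-connected homotopy-associative $H$-space, so the Milnor--Moore theorem gives $H_*(\c(X);\C)\cong\widehat{\s}(\pi_*(\c(X))\otimes\C)$; by Segal's Theorem~\ref{main1} one has $\pi_*(\c(X))\cong\widetilde{k}_*(X)$, and a Chern character computation yields $\widetilde{k}_*(X)\otimes\C\cong\widetilde{K}_*(X)\otimes\C[q]$, which Poincar\'e duality (this is the only place the even-dimensional $\spinc$ hypothesis enters) turns into $K^*(X)\otimes\C[q]$. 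On the algebraic side, the structure theorem for $\F^q(X)$ (Theorem~\ref{description} here, due to Segal and Wang) already says $\F^q(X)\cong\s\bigl(\bigoplus_{n\geq1}q^nK(X)\otimes\C\bigr)$. Comparing the two supersymmetric algebras finishes the proof. Your route via the stratification and the delocalized Chern character for $K_{S_n}(X^n)$ is in principle workable and is closer in spirit to how one might \emph{discover} the result, but it forces you to control the spectral sequence of the filtration (you implicitly assume it degenerates and that the filtration is complete) and to check Hopf compatibility stratum by stratum. The paper's Milnor--Moore shortcut bypasses all of that: freeness of both Hopf algebras reduces the comparison to a single graded vector space identification.
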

 As is proved in \cite{segal1996}, and generalized in \cite{wang2000}, the algebra $ \F^q(X)$ carries several interesting properties. For example it is a free $\lambda$-ring, has the size of a Fock space of
a certain infinite-dimensional Heisenberg superalgebra, and it is the target of the power operations in equivariant K-theory. 

 In \cite{wang2000}, Wang defines an equivariant generalization of $\F^q(X)$. Consider the \textit{wreath} product $G_n=G\wr \S_n$ which is a 
semidirect product of the $n$-th direct product $G^n$ of $G$ and the symmetric group $\S_n$. If $G$ acts on $X$ there is an action 
of the group $G_n$ on $X^n$ induced by the actions of $G^n$ and $\S_n$ on $X^n$.
\begin{defin}Let $X$ be a $G$-CW-complex, we denote by $\F_G^q(X)$ to the $\Z\times\Z/2\Z$-graded Hopf algebra
$$\F_G^q(X)=\bigoplus_{n\geq0}q^nK_{G_n}^*(X^n)\otimes\C.$$
 with the product and coproduct defined using induction and restriction functors on $K_G$. For a precise definition see Definition \ref{indhaces}.
\end{defin}
We generalize Theorem \ref{main2} to an equivariant setting. We prove that the ho\-mo\-lo\-gy groups of the $G$-fixed points of $\c(X,G)$ carry a natural $\Z\times\Z/2\Z$-graded Hopf algebra structure. Using an equivariant version of the Chern character due to L\"uck in \cite{lu2002}, we find a $\Z\times\Z/2\Z$-graded Hopf algebra isomorphism from $H_*(\c(M,G)^G;\C)$  to $\widehat{\F_G^q(M)}$. More precisely we have
\begin{teor}\label{principalhopf}Let $M$ be an even dimensional $G$-$\spinc$-manifold. The $\Z\times\Z/2\Z$-graded complex vector space $H_*(\c(M,G)^G;\C)$ carries a natural $\Z\times\Z/2\Z$-graded Hopf algebra and there is a natural isomorphism of $\Z\times\Z/2\Z$-graded Hopf algebras
  $$H_*(\c(M,G)^G;\C)\cong\widehat{\mathfrak{F}_G^q(M)},$$where the completion is taken over the augmentation ideal.
\end{teor}
Applying this theorem to $M=S^0$, we obtain an expression for the homology of the $G$-fixed points of the equivariant infinite Grasmannian in terms of the representation ring of wreath products. 

This paper is organized as follows. In Section \ref{section1} we recall the definition of the equivariant K-theory spectrum, and fix notations about some functions spaces and $G$-actions over these. We recall  definitions of equivariant homology theories, and its connective versions in the sense of  \cite{lu2002}.

In  Section \ref{section2} we define the configuration space associated to equivariant K-homology and prove that the functor defined as its homotopy groups is a $G$-homology theory. In Section \ref{section2.1}  we define a natural transformation from the homotopy groups of the configuration space to equivariant K-homology and prove that this natural transformation is an isomorphism, when we apply it to  the space $S^0$, with the trivial $G$-action.

In Section \ref{section3} we prove that the functor defined in Section \ref{section2} has an induction structure over finite groups in the sense defined by L\"uck in \cite{lu2002}. We use this fact to prove that the functor is an equivariant homology theory, and finally deduce that it is naturally equivalent to equivariant connective K-homology.

In Section \ref{section4} we describe an isomorphism between the complex homology of the fixed point space of the configuration space and the Hopf algebra $\F^q_G(X)$.
\tableofcontents
\section{Preliminaries}\label{section1}
\subsection{Notation}
Let $(X,x_0)$ and $(Y,y_0)$ be (left) based $G$-spaces. There is a (left) $G$-action on the set of $Map_0(X,Y)$ of based mappings from $X$ to $Y$ defined by 
\begin{align*}
G\times Map_0(X,Y)&\longrightarrow Map_0(X,Y)\\
(g,f)&\longmapsto (x\mapsto g(f(g^{-1}x))).
\end{align*}

If $Map_0(X,Y)$ carries the compact open 
topology and if $X$ is locally compact then the $G$-action  is continuous. Notice that the fixed 
point set $Map_0(X,Y)^G$ consists of the set of $G$-equivariant maps from $X$ to $Y$. The homotopy classes of $Map_0(X,Y)^G$ is denoted by $[X,Y]^G$. A $G$-space $X$ is called $G$-connected if $X^H$ is connected for every $H\subseteq G$.
\subsection{The classifying space for equivariant K-theory } In this section we construct a convenient classifying space for equivariant K-theory for actions of compact Lie groups. This space is the infinite Grassmannian of an infinite dimensional complex representation of $G$ that contains up to isomorphism every irreducible representation of $G$ countably many times. The results in this section are taken from Section XIV.4 in \cite{may91}.
\begin{defin}
  Recall  that  a  \textit{$G$-CW  complex structure}  on  the  pair $(X,A)$  consists  of a  
  filtration of  the $G$-space $X=\bigcup_{-1\leq n } X_{n}$ with $X_{-1}=\emptyset$,  
  $X_{0}=A$ and such that every  space   is inductively  obtained  from  the  previous  one   
  by  attaching  cells  with  pushout  diagrams  
  $$\xymatrix{\coprod_{i} S^{n-1}\times G/H_{i} \ar[r] \ar[d] & X_{n-1} \ar[d] \\ 
  \coprod_{i}D^{n}\times G/H_{i} \ar[r]& X_{n}}.$$
  A $G$-CW-complex $(X,A)$ is called \textit{finite} if it has finitely many cells. Every $G$-CW-complex considered in this paper is assumed to be finite.    
  \end{defin}
\begin{defin}
Let $G$ be a compact Lie group. A complete $G$-universe is a complex separable Hilbert space $\hu_G$ with a linear action of $G$ that contains up to isomorphism every irreducible finite dimensional representation of $G$ infinitely many times. 
\end{defin}
Peter-Weyl theorem gives us a model for a complete $G$-universe. It implies that for $\hu$ a separable complex Hilbert 
space, the Hilbert space $\hu_G=L^2(G)\otimes \hu$ is a complete $G$-universe. 

As in \cite{may91} fixing a complete $G$-universe $\hu_G$, one can construct a representing space $BU_G$ of $G$-equivariant K-theory as a colimit of finite dimensional equivariant Grassmannians. We have the following result.
\begin{teor}\label{grasmaniana}
For a finite based $G$-CW complex $(X,x_0)$, the definition of $\widetilde{K}_G(X)$ as the Grothendieck group of stable isomorphism classes of $G$-vector bundles over $X$ and the classification theorem for complex 
$G$-vector bundles lead to an isomorphism
$$[X, BU_G]^G\cong \widetilde{K}_G (X).$$
\end{teor}
Using Theorem \ref{grasmaniana} we can define the equivariant K-theory spectrum. 
\begin{defin}\label{ktheoryspectrum}
The \textit{equivariant K-theory spectrum} is the sequence of $G$-spaces
$$KU_n=\begin{cases}BU_G &\text{ if $n$ is even,}\\
                    \Omega BU_G &\text{ if $n$ is odd.}\end{cases}$$
\end{defin}
\begin{remark}\label{khomologydefin}Theorem \ref{grasmaniana} implies that if $X$ is a finite based $G$-CW-complex we have isomorphisms 
\begin{align*}
\widetilde{K}_G^n(X)&\cong[X,KU_n]^G, \text{ and }\\
\widetilde{K}^G_n(X)&\cong[X\wedge KU_n]^G \end{align*}for all $n\in\Z$.
\end{remark}
\begin{remark}We are only considering \emph{naive $G$-spectra} because we are only interested to represent equivariant connective K-homology as a $\Z$-graded homology theory.
\end{remark}
\subsection{Finite rank operators space}
For a definition of C$^*$-algebra see \cite{murphy}. We define the C$^*$-algebras  $$\FR_n(\hu_G)=\{A\in \End(\hu_G)\mid \rank(A)\leq n\}$$ 
with the compact 
open topology. In this case, the topology coincides with the weak topology because of the finite rank condition.
We have an inclusion $$\FR_n(\hu_G)\longrightarrow \FR_{n+1}(\hu_G).$$  The group $G$ acts continuously on $\FR_n(\hu_G)$.
\subsection{Equivariant homology theories on $G$-CW-complexes}
We introduce $G$-homology theories and equivariant homology theories. We are following \cite{lu2002}.
\begin{defin}
  A \textit{$G$-homology theory} $\mathscr{H}_*^G$ with values in $R$-modules is a collection of covariant
  functors $\mathscr{H}_n^G$
  from the category of $G$-CW-pairs to the category of $R$-modules, indexed by
  $n\in \Z$, together with natural transformations called the boundary map (here $\H_n^G(A)$ is a shorthand for $\H_n^G(A,\emptyset)$)
  $$\partial_n^G:\mathscr{H}_n^G(X,A)\rightarrow\mathscr{H}_{n-1}^G(A)$$
  for $n\in\Z$, such that the following axioms are satisfied:
    \begin{enumerate}
    \item \textbf{$G$-homotopy invariance.}\\    
    If $f_0$ and $f_1$ are $G$-homotopic maps 
    $(X,A)\rightarrow(Y,B)$ of $G$-CW-pairs, then the induced maps 
    $$f_{0*},f_{1*}:\mathscr{H}_n^G(X,A)\rightarrow \mathscr{H}_n^G(Y,B)$$
    are the same for all $n\in\Z$.
    \item \textbf{Long exact sequence of a pair.}\\
    Given a pair $(X,A)$ of $G$-CW-complexes, there is a long exact sequence
    $$\ldots\xrightarrow{j_*}\mathscr{H}_{n+1}^G(X,A)\xrightarrow{\partial_{n+1}^G}\mathscr{H}_n^G(A)
    \xrightarrow{i_*}\mathscr{H}_n^G(X)\xrightarrow{j_*}\mathscr{H}_n^G(X,A)\xrightarrow{\partial_n^G}\ldots,$$where $i:A\rightarrow X$ and $j:X\rightarrow(X,A)$ are the inclusions.  
    \item \textbf{Excision.}
     
    Let $(X,A)$ be a $G$-CW-pair and let $f:A\rightarrow B$ be a cellular G-map of G-CW-complexes. Equip $(X\cup_f B,B)$ with the induced structure of a 
    $G$-CW-pair. Then the canonical map $F:(X,A)\rightarrow( X\cup_f B,B)$ induces an isomorphism
    $$F_*:\H_n^G(X,A)\rightarrow\H_n^G(X\cup_f B,B).$$
    \item\textbf{Disjoint union axiom.}\\
    Let $\{X_i\mid  i\in I\}$ be a family of $G$-CW-complexes. Denote by $$j_i:X_i\rightarrow \coprod_{i\in I}X_i$$ the
    canonical inclusion. Then the map
    $$\bigoplus_{i\in I}j_{i*}:\bigoplus_{i\in I}\H_n^G(X_i)\rightarrow\H_n^G(\coprod_{i\in I}X_i)$$
    is a group isomorphism.
    \end{enumerate}
  \end{defin}  
Now we will define \textit{equivariant homology theories} following \cite{lu2002}.
\begin{defin}\label{induction}
Let $\alpha: H\rightarrow G$ be a group homomorphism. Given an $H$-space $X$, 
define the 
\textit{induction of $X$ with $\alpha$ }to be the $G$-space $G\times_\alpha X$ which is the quotient of $G\times X$ by the right $H$-action 
\begin{align*}
G\times X\times H&\longrightarrow G\times X\\
((g,x), h)& \longmapsto (g\alpha(h),h^{-1}x).\end{align*} If $\alpha:H\rightarrow G$ is an inclusion, we
also write $G\times_HX$ instead of $G\times_\alpha X$.

An equivariant homology theory $\H_*^?$
with values in $R$-modules consists of a $G$-homology theory $\H^G_*$
with values in $R$-modules for each group $G$ together
with the following so called \textit{induction structure}: given a group homomorphism $\alpha: H\rightarrow G$
and an $H$-CW-pair $(X,A)$ such that $\Ker(\alpha)$ acts freely on $X$, for all $n\in\Z$ there are  natural isomorphisms
$$ind_\alpha:\H_n^H(X,A)\xrightarrow{\ \cong\ }\H_n^G(G\times_\alpha X,G\times_\alpha A)$$
satisfying:
  \begin{enumerate}
  \item \textbf{Compatibility with the boundary homomorphisms.}
  $$\partial_n^G\circ ind_\alpha=ind_\alpha\circ\partial_n^H.$$
  \item \textbf{Functoriality.}
  Let $\beta:G\rightarrow K$ be another group homomorphism such that $\Ker(\beta\circ\alpha)$ acts freely on
  $X$. Then we have for $n\in\Z$
  $$ind_{\beta\circ\alpha}=f_{1*}\circ ind_\beta\circ ind_\alpha:\H_n^H(X,A)\longrightarrow\H_n^K(K\times_{\beta\circ\alpha}(X,A)),$$
  where 
  \begin{align*}
  f_1:K\times_\beta(G\times_\alpha X,G\times_\alpha A)&\xrightarrow{\ \cong\ }(K_{\beta\circ\alpha}X,K_{\beta\circ\alpha}A)\\ 
  (k,g,x)&\longmapsto(k\beta(g),x)\end{align*} is the natural $K$-homeomorphism.
  \item \textbf{Compatibility with conjugation. }We denote by $c(g):G\rightarrow G$ the conjugation homomorphism $c(g)(h)=ghg^{-1}$. For $n\in\Z$, $g\in G$ and a $G$-CW-pair $(X,A)$ the homomorphism 
  $$ind_{c(g)}:\H_n^G(X,A)\longrightarrow\H_n^G(G\times_{c(g)}X,G\times_{c(g)}A)$$ agrees with $f_{2*}$ for the $G$-homeomorphism 
  \begin{align*}
  f_2:(X,A)&\longrightarrow (G\times_{c(g)}X,G\times_{c(g)}A)\\
  x &\longmapsto (1,g^{-1}x).\end{align*} 
  \end{enumerate}
\end{defin}  
For examples of equivariant homology theories see examples 1.3, 1.4 and 1.5 in \cite{lu2002}.

\begin{remark}Notice that the notion of equivariant homology theory introduced here is more restrictive than the notion of \emph{compatible family of equivariant ho\-mo\-lo\-gy theories} in \cite{lm86} because that notion only consider the case when the homomorphism $\alpha$ is an inclusion.
\end{remark}

It is natural (in analogy with \cite[Pg. 205]{ad95}) to associate to the equivariant K-homology a \textit{connective} 
$G$-homology theory in the following way.
  \begin{prop}\cite[Secc. 3]{greenlees2005}\label{connectiveversion}
There is a $G$-homology theory $\mathit{k}_*^G$ such that:
  \begin{enumerate}
  \item There is a natural transformation $c:\mathit{k}_*^G\rightarrow K_*^G$ such that $c$ is an isomorphism when we evaluate in homogeneous spaces
  $G/H$ over positive indexes.
  \item For every $G$-CW-complex $X$ and for every $i<0$, $\mathit{k}_i^G(X)=0$.
  \end{enumerate}
The functor $\mathit{k}_*^G$ is uniquely determined up to natural equivalence by these conditions and is called \textit{$G$-equivariant connective K-homology}.
\end{prop}

\section{The configuration space}\label{section2}
Let us recall from Section \ref{section0} that in the non-equivariant case, the connective K-homology groups of a finite based CW-complex 
   $(X,x_0)$ are naturally isomorphic to the homotopy groups of the configuration space $\c(X)$ of 
   finite subsets of $X-\{x_0\}$, where each point is labelled by  
  an element of  the infinite Grassmannian of $\C^\infty$. It would be natural to describe the equivariant K-homology of a finite based $G$-CW-complex $(X,x_0)$ as the equivariant homotopy groups of a configuration  space $\c(X,x_0,G)$ whose elements are finite subsets  of $X-\{x_0\}$ and  the labels are elements of an appropriate equivariant analogue to the infinite Grassmannian. In this section, we construct an equivariant analogue of $\c(X)$ and prove that its equivariant homotopy groups form a  $G$-homology theory.
\subsection{Descriptions of the configuration space}In this section we define the $G$-space $\c(X,G)$ in terms of spaces of $\ast$-homomorphisms. We also give a description of $\c(X,G)$ as a configuration space and a geometric description of the $G$-action on it.
  \begin{defin}\label{definconf}Let $G$ be a finite group and $(X,x_0)$ be a based $G$-connected, $G$-CW-complex.
  Let $\c(X, x_0,G)$ be the \textit{$G$-space of configurations of complex vector spaces over $(X,x_0)$}, defined as the following union, with respect to the inclusions $\FR_n(\hu_G)\rightarrow\FR_{n+1}(\hu_G)$
$$\c(X,x_0,G)=\bigcup_{n\geq0}\Hom^*(C_0(X),\FR_n(\hu_G)),$$with the compact open topolo\-gy. 
  Notice that * refers to *-homomorphism. 
  
  We endow to $\c(X,x_0,G)$ 
with a $G$-action in the following way. If $F\in \c(X,x_0,G)$, we define 
\begin{align*}g\cdot F: C_0(X)&\longrightarrow \FR_n(\hu_G)\\
f&\longmapsto g\cdot F(g^{-1}\cdot f).\end{align*}
 This action is continuous. As $X$ is $G$-connected, $\c(X,x_0,G)$ is also $G$-connected and its equivariant homotopy groups do not depend on the base point $x_0$, we can denote $\c(X,x_0,G)$ simply by $\c(X,G)$. 
  \end{defin}
  
The space $\c(X,G)$ has a description as a configuration space. To obtain that description we need to recall the Gelfand-Naimark theorem, for a proof of the Gelfand-Naimark theorem see \cite[Thm. I.3.1]{algbyexample}.
\begin{defin}The \textit{spectrum} of the C$^*$-algebra $C_0(X)$ is the based topological space of characters $$\widehat{C_0(X)}=\Hom^*(C_0(X),\C),$$
with the strong$^*$-topology and with base point the zero character ${\bf0}$.
\end{defin}

\begin{teor}[Gelfand-Naimark]Evaluation gives us a homeomorphism of based spaces
\begin{align*}(X,x_0)\xrightarrow{\ \xi\ }&\ (\widehat{C_0(X)},{\bf0})\\
x\longmapsto&\  \xi(x)[f]=f(x). \end{align*}
\end{teor}
  Let $F\in Hom^*(C_0(X),\FR_n(\hu_G))$. For every $f\in C_0(X)$ we have
  \begin{align*}
  F(f)(F(f))^*&=F(ff^*)\\
  &=F(f^*f)\\&=F(f)^*F(f).
  \end{align*}  
  Then the operator $F(f)$ is normal, so it is 
diagonalizable. Moreover as $C_0(X)$ is commutative the corresponding eigenspaces do not depend on $f$ because all elements in $$\{F(f)\mid f\in C_0(X)\}\subseteq FR_n(\hu_G)$$ are simultaneously diagonalizable. 

Taking eigenvalues give us a continuous map
$$\c(X,G)\longrightarrow SP^\infty(\widehat{C_0(X)},{\bf0})$$
composing with the Gelfand-Naimark homeomorphism we have a map
$$\c(X,G)\longrightarrow SP^\infty(X,x_0).$$ In the same way we have the following description of $\c(X,G)$.
\begin{remark}\label{confdescription}
The space $\c(X,G)$ has a description as a configuration space.
  \begin{enumerate}
  \item For every element $F\in\c(X,G)$ consider the set of eigenvalues of $F$, 
  $$\sum_{i}m_ix_i\in SP^\infty(X,x_0)$$with $x_i\neq x_j$ if $i\neq j$. We can associate a configuration 
  $$\{(x_1,V_1),\ldots, (x_n,V_n)\},$$
 where the corresponding $V_i\subseteq\hu_G$ is the eigenspace corresponding to $x_i$; it is a finite dimensional vector subspace of $\hu_G$ such that if $x_i\neq x_j$ then $V_i\perp V_j$.
  \item The topology of $\c(X,G)$ can be recovered in the above description in the following way.
  Let $(F_i)_{i\geq0}$ be a sequence converging to $F$ in $\c(X,G)$. Suppose that each $F_i$ is characterized by a configuration
  $$\{(x_1^i,V_1^i),\ldots,(x_{n_i}^i,V_{n_i}^i)\},$$
  and $F$ by
  $$\{(x_1,V_1),\ldots, (x_n,V_n)\}.$$
  Then the set $\{x_1^i,\ldots,x_n^i\}$ viewed as an element in $SP^\infty(X,x_0)$ converges to $\{x_1,\ldots,x_n\}$. That means that up to a reordering of the indexes any $x_k^i$ converges to a unique $x_l$, and then on the labels one should impose the condition
  $$\bigoplus_{\{k\mid x_i^k\rightarrow x_l\}}V_k^i\rightarrow V_l$$
  as elements in $BU_G$. 
  \end{enumerate}
  \end{remark}  
 
\begin{remark}\label{conf} In the context of Remark \ref{confdescription} we can describe the $G$-action as follows. For $F\in \c(X,G)$ 
represented by a configuration 
$$\{(x_1,V_1),\ldots,(x_n,V_n)\},$$ 
we have $$(g\cdot F)(f)(gv)=g[(F(g^{-1}f))(g^{-1}gv)]=g[(f(gx_i))(v)]=(f(gx_i))(gv),$$ 
where the last equality follows because $f(gx_i)$ is a scalar. Then one has a natural continuous  $G$-action on $\c(X,G)$ on the description given in Remark \ref{confdescription} 
 $$g\cdot\{(x_1,V_1),\ldots,(x_n,V_n)\}=\{(gx_1,gV_{x_1}),\ldots,(gx_n,gV_{x_n})\}.$$
\end{remark}
\subsection{The homotopy groups of $\c(X,G)$}
We denote by $CW^{(2)}_G$ the category of pairs $(X,A)$ where $X$ is a $G$-CW-complex and $A$ a closed $G$-ANR, which means that there exists a $G$-open set $U\supseteq A$ such that 
$U$ is a weak $G$-
deformation retract of $A$. In \cite{mit83} is proved that $CW^{(2)}_G$  is naturally equivalent to the category of $G$-CW-pairs.
\begin{defin}
We define a family of covariant functors from $CW^{(2)}_G$ to the category of $\Z$-graded abelian groups. Let $(X,A)\in CW^{(2)}_G$ and suppose that $X$ is $G$-connected. Note that $(X/A, A/A)$ is a based $G$-space. We define the functors
\begin{align*}
\underline{k}&_*^G(-,-):CW^{(2)}_G\longrightarrow \Z-Ab\\&\underline{k}_n^G(X,A)=\pi_n(\c(X/A,G)^G).\end{align*}

If $X$ is non-$G$-connected we extend the functor $\underline{k}_*^G$  by defining 
 $$\underline{k}_*^G(X,A)=\pi_{*+1}^G(\c(\Sigma (X/A),G)).$$
 
Let us define the unreduced functor. Given a $G$-space $X$ consider the space $X_+=X\cup\{+\}$ with base point of  $+$, $G$ acts trivially on $+$. Define  $$\underline{k}_*^G(X)=\underline{k}_*^G(X_{+},+).$$
\end{defin}
\begin{teor}\label{homologytheory}The functor $\underline{k}_*^G$ is a $G$-homology theory.
\end{teor}
We will prove this theorem in the next Section.
\subsection{Proof of Theorem \ref{homologytheory}}
First we prove that $\underline{k}_*^G$ satisfies the homotopy axiom, the excision axiom and the disjoint union axiom. For the long exact sequence axiom  we will need some lemmas.
\begin{prop} The functor $\underline{k}_*^G$ satisfies the homotopy axiom.
 \end{prop}
\begin{proof} 
We can define a map 
\begin{align*}\omega:I\times \c(X,G)&\to \c(I\times X,G)\\
(t,F)&\mapsto \left(f\mapsto F(f(t,-))\right).\end{align*}
Where $f\in C_0(I\times X)$. On the other hand if $H:I\times X\to Y$ is a homotopy 
we have the induced map $H_*:\c(I\times X,G)\to \c(Y)$. The map $H_*\circ\omega$ gives us a homotopy. Taking the induced map on homotopy groups we obtain that the induced maps by $H(0,-)$ and $H(1,-)$ in $\underline{k}_*^G$ are the same.

\end{proof}
\begin{prop}
The functor $\underline{k}_*^G$ satisfies the excision axiom.
\end{prop}
\begin{proof}
Given a $G$-map $f:A\to B$, there is a canonical $G$-homotopy equivalence $\bar{F}:X/A\rightarrow X\cup_fB/B$. Applying the homotopy axiom to $\bar{F}$ the result follows as stated.
\end{proof}
\begin{prop}The functor $\underline{k}_*^G$ satisfies the disjoint union axiom.
\end{prop}
\begin{proof}
  Let $X=\coprod_{i\in I}X_i$ be a disjoint union, in this case 
  $$\c(X,G)=\bigcup_{n\geq0}\Hom^*(C_0(\Sigma(\coprod_{i\in I}X_i)),\FR_n(\hu_G))^G.$$
  First notice that $C_0(\Sigma(\coprod_{i\in I}X_i))$ can be identified with $\prod_{i\in I}C_0(\Sigma X_i)$. For $j\in I$ we denote the inclusion by $$\iota_j:\Sigma X_j\longrightarrow\coprod_{i\in I}\Sigma X_i,$$
  taking pullback we have
  $$\iota_j^*:C_0(\coprod_{i\in I}\Sigma X_i)\longrightarrow C_0(\Sigma X_j).$$
   
  Let $(F_i)_{i\in I}\in \prod_{i\in I}\Hom^*(C_0(\Sigma X_i),\FR_n(\hu_G))$ be an element in the \emph{weak} product, the union of the products of finitely many factors. We can define $$F\in \Hom^*(\prod_{i\in I}C_0(\Sigma X_i),\FR_n(\hu_G))$$ by
  \begin{equation}\label{disjoint}F((f_i)_{i\in I})=\sum_{j\in I}F_j(\iota_j^*((f_i)_{i\in I})),\end{equation}
  as the rank of $F$ is finite, the sum on the right side is finite for every element in $\prod_{i\in I}C_0(\Sigma X_j)$. Equation \ref{disjoint} gives us a $G$-homeomorphism
  $$\prod_{i\in I}\Hom^*(C_0(\Sigma X_i),\FR_n(\hu_G))\longrightarrow\Hom^*(\prod_{i\in I}C_0(\Sigma X_j),\FR_n(\hu_G)) $$
  Where in the left side we are taking the weak product.
  Taking union and homotopy groups, for every $k\geq0$, we have an isomorphism  
  $$\bigoplus_{i\in I}\pi_k(\c(X_i,G))\xrightarrow{\ \cong \ }\pi_k(\c(\coprod_{i\in I}X_i,G)^G).$$We conclude that $\underline{k}_*^G$ satisfies the disjoint union axiom.
\end{proof}
To prove the long exact sequence axiom for $\underline{k}^G_*$, we need to recall the definition of $G$-quasifibration.
\begin{defin}[\cite{wa80}]
  A map $p:E\rightarrow B$ on the category of based $G$-CW-complexes is a \emph{$G$-quasifibration} if  for every $b\in B$, $x_0\in p^{-1}(b)$ and $H\subseteq G_b=\{g\in G\mid gb=b \}$, the
induced map $$p_\star : \pi_i(E^H, p^{-1}(b), x_0 )\to\pi_i(B^H, b)$$ is an isomorphism for all $i\geq0$.
 \end{defin}

The proof of a fibration inducing a long exact sequence on homotopy groups only uses the weaker condition of quasifibration, and the next proposition follows.
\begin{prop}\label{lesquasi}
If $p:E\rightarrow B$ is a $G$-quasifibration, then for every $b\in B$ and $H\subseteq G_b$ there is a long exact sequence of homotopy groups
$$\cdots\longrightarrow\pi_n(E^H,p^{-1}(b),x_0)\xrightarrow{\ p_*\ }\pi_n(B^H,b)\xrightarrow{\ \partial\ }\pi_{n-1}(p^{-1}(b),x_0)\longrightarrow\cdots.$$
For every $x_0\in p^{-1}(b)$.
\end{prop}
We need to recall  the following lemma.
  \begin{lema}\label{edold}(\cite{wa80})
  A map $p : E\rightarrow B$ is a $G$-quasifibration if any one of the following conditions
  is satisfied:
    \begin{enumerate}
      \item \label{edold1}The space $B$ can be decomposed as the union of $G$-open sets $V_1$ and $V_2$ such that each of
      the restrictions $p^{-1}(V_1)\rightarrow V_1$, $p^{-1}(V_2)\rightarrow V_2$, and $p^{-1}(V_1\cap V_2)\rightarrow V_1 \cap V_2$ 
      are $G$-quasifibrations.
      \item \label{edold2}The space $B$ is the union of an increasing sequence of $G$-subspaces $B_1\subseteq B_2\subseteq\cdots$ with the property 
      that each $G$-compact set in $B$ lies in some $B_n$, and such that each restriction $p^{-1}(B_n)\rightarrow B_n$ is a 
      $G$-quasifibration.
      \item \label{edold3}There is a $G$-deformation $\Gamma_t$ of $E$ into a $G$-subspace $E_0$, covering a deformation $\bar{\Gamma}_t$ of $B$ 
      into a $G$-subspace $B_0$, such that the restriction $E_0\rightarrow B_0$ is a $G$-quasifibration and 
       $\Gamma_1 :p^{-1}(b)\rightarrow p^{-1}(\bar{\Gamma}_1(b))$ is a $G_b$-weak 
      homotopy equivalence for each $b\in B$.
    \end{enumerate}
  \end{lema}
For $(X,A)\in CW^{(2)}_G$ there is a canonical  inclusion $i_*:\c(A,G)\to\c(X,G)$, induced by $i:A\to X$ 
and a canonical projection  $p_*:\c(X,G)\rightarrow \c(X/A,G)$ induced by $p:X\to X/A$. For simplicity we identify the C$^*$-algebra $C_0(X/A)$ with 
$$C_0(X,A)=\{f:X\rightarrow\C\text{ continuous }\mid f(A)=\{0\}\}\subseteq C_0(X).$$We can describe $p_*$ using this identification. For 
$f\in C_0(X/A)$ and $F\in\c(X,G)$ note that $$p_*(F)(f)=F(f)$$ 

Let $N$ be a $G$-neighborhood of $A$ in $X$, such that $N$ is a $G$-deformation retract of $A$; 
we denote the $G$-retraction
 by $r:N\rightarrow A$. The map $r$ induces a $G$-map $r^*:C_0(A)\rightarrow C_0(N)$.
 
The pair of $G$-open sets $\{N,X- A\}$ is a $G$-open covering of $X$, and as a consequence there is a $G$-equivariant  partition of unity 
$\{\rho_1,\rho_2\}$ (it can be obtained simply by taking the non-equivariant partition of unity and averaging by $G$). The partition is associated to the covering in a way that $supp(\rho_1)\subseteq N$ and $supp(\rho_2)\subseteq X- A$. 

We introduce our first technical lemma that we will use together with lemma \ref{edold}(3).
  \begin{lema}\label{equiv}For every $b\in\c(X/A,G)$ there exists a map
  $$\mu_b:p_*^{-1}(b)\longrightarrow\c(A,G)$$
  which is a $G_b$-homotopy equivalence.
  \end{lema}
  \begin{proof}
  If $F\in p^{-1}_*(b)$ where $F:C_0(X)\longrightarrow FR_n(\hu_G) $ using the identification $C_0(X/A)\cong C_0(X,A)$ we have that $$F\mid {C_0(X,A)}=b.$$ 
  
  Define for $F\in p^{-1}(b)$ and $f\in C_0(A)$
  \begin{align*}\mu_b:p^{-1}(b)&\longrightarrow\c(A,G)\\
   F&\longmapsto  \mu_b(F)(f)=F(\rho_1.r^*(f)).\end{align*}
  
  Now define a homotopy inverse of $\mu_b$. Let $$\gamma_b:\c(A,G)\longrightarrow p^{-1}(b)$$be a map defined for $f\in C_0(X)$ and $F\in\c(A,G)$, by
  $$\gamma_b(F)(f)=F(f\mid_A)+b(\rho_2.f).$$
  
  The composition $\mu_b\circ\gamma_b:\c(A,G)\longrightarrow\c(A,G)$, is $G_b$-homotopic to the identity, because we have for 
  $F\in\c(A,G)$ and $f\in C_0(A)$:
\begin{align*}
      (\mu_b\circ\gamma_b)(F)(f)&=\gamma_b(F)(\rho_1.r^*(f))\\
                      &=F((\rho_1.r^*(f))\mid_A)+b(\rho_2\rho_1r^*(f))\\
                      &=F(f)+b(\rho_2\rho_1r^*(f)).
                                                     \end{align*}
Choosing a $G_b$-equivariant  path between $b$ and $0$ 
  define a deformation $H_t$ from the last expression  to the identity as follows. Let $\gamma_t$ be a path that connects $b$ and $0$. Consider the map
  $$H_t:\c(A,G)\longrightarrow\c(A,G)$$$$H_t(F)(f)=F(f)+\gamma_t(\varphi^{-1}(\rho_2\rho_1r^*(f)))$$The map $H_t$ is the desired homotopy.
  
  The composition $\gamma_b\circ\mu_b:p_*^{-1}(b)\rightarrow p_*^{-1}(b)$ is $G_b$-homotopic to the identity because for 
$F\in p_*^{-1}(b)$ and $f\in C_0(X)$ we have
  \begin{align*}
  \gamma_b\circ\mu_b(F)(f)&=\mu_b(F)(f\mid_A)+b(\varphi^{-1}(\rho_2f))\\
                                             &=F(\rho_1r^*(f\mid_A))+b(\varphi^{-1}(\rho_2f))\\
                                             &=F(\rho_1r^*(f\mid_A)+\rho_2f),\end{align*}
  where the last equality follows because  $\rho_2f\in C_0(X,A)$,  but $\rho_1r^*(f\mid_A)+f\rho_2$ can be continuously deformed
  (in $C_0(X)$) to $\rho_1f+\rho_2f=f$ by a linear homotopy. Note that since $\rho_1$, $\rho_2$  and $r^*$ are $G_b$- equivariant maps, 
  then $\mu_b$ is a $G_b$-homotopy equivalence.
    \end{proof}
\begin{teor}\label{quasifib1}The map 
 $$p_*:\c(X,G)\longrightarrow \c(X/A,G)$$ is a $G$-quasifibration.
\end{teor}
 \begin{proof}

 Let us filter $\c(X/A,G)$ by $G$-closed spaces in the following way $$\c^n(X/A,G)=\{F\in\ca\mid \mbox{rank}(F)\leq n\}.$$

 We want to prove in the following lines that $p\mid p^{-1}(\cn)$ is a quasifibration and using Lemma \ref{edold}(2) conclude that $p$ is a quasifibration. We proceed by induction.

  \begin{lema}\label{filtration1}
   The restriction map $p\mid p^{-1}(\cn)$ is a $G$-quasifibration.
  \end{lema}
  \begin{proof}
The case   $n=0$ is trivial because it is a map from a point to a point. Now suppose that $p\mid p^{-1}(\cn)$ is a quasifibration. We will prove that $p\mid p^{-1}(\c^{n+1}(X/A,G))$ is a 
   quasifibration in two steps.\\

  \textbf{Step 1:} Let us show that we can find a $G$-open set $U$ in $p^{-1}(\c^{n+1}(X/A,G))$ such that $U$ 
  is a (weak) deformation retract of $p^{-1}(\cn)$ and $p\mid p^{-1}(U)$ 
  is a quasifibration. The existence of $U$ will be proved in the following argument.   Recall that there is a $G$-neighborhood   $N\subseteq X$ of $A$ such that $N$ is a $G$-deformation retract of $A$. 
 
  Let $r_t:X\rightarrow X$, ($t\in[0,1]$) be a homotopy, such that $r_1(N)=A$, $r_t(a)=a$ for every $a\in A$, and $r_0=id_X$. The homotopy $r_t$ induces also a homotopy $$\bar{r}_t: X/A\to X/A.$$ Consider 
  the map  $$(\bar{r}_1)_*:\cnn\longrightarrow\cnn.$$ 
  
  The set $(\bar{r}_1)^{-1}_*(\cn)$ is a closed subset of $\cnn$. The inclusion $i^*:C_0(X,N)\to C_0(X,A)$ induces a map $$i_*:\c^{n+1}(X/A,G)\longrightarrow\c^{n+1}(X/N,G).$$ Define the set 
  $$W=i_*^{-1}(\c^n(X/N,G))\subseteq \c^{n+1}(X/A,G).$$
  It is an open set in $\cnn$ such that $$(\bar{r}_1)_*^{-1}\left(\cn\right)\supseteq W\supseteq\cn$$ and $W$ deforms in $\cn$. Consider the set $U=p^{-1}(W)$. The homotopy 
  $$r_{t*}:p^{-1}(\cnn)\longrightarrow p^{-1}(\cnn)$$restricted to $U$ is a weak deformation retract of $U$ to $p^{-1}(\cn)$, and the homotopy 
  $r_{t*}$ covers $\bar{r}_{t*}$. To conclude that $p:U\rightarrow p(U)$ is a $G$-quasifibration it is enough 
  to prove that $r_{1*}:p^{-1}(b)\rightarrow p^{-1}(\bar{r}_{1*}(b))$ is a weak $G_b$-homotopy equivalence and then  use Lemma \ref{edold}(3). The proof of $\bar{r}_{1*}$ is a $G_b$-homotopy equivalence is completely  analogue to the proof 
  of Lemma \ref{equiv}.

  \textbf{Step 2:} Prove that $$p\mid(p^{-1}(\can))$$and$$p\mid(p^{-1}(\can)\cap p^{-1}(U))$$ are $G$-quasifibrations.

  In this step we prove directly that the induced maps in the corresponding homotopy groups are isomorphisms. We only prove that $$p\mid(p^{-1}(\can))$$ is a $G$-quasifibration. The case for $$p\mid(p^{-1}(\can)\cap p^{-1}(U))$$ is completely analogue.
  
  In order to prove that the induced map on homotopy groups is surjective we want to define a continuous section $$\hspace{1.5cm}s:\can\longrightarrow p^{-1}(\can).$$
  As in Remark \ref{confdescription} every $F\in\can$ can be identified with a configuration   
  $$\{(x_1,V_1),\ldots (x_n,V_n)\}.$$ There exists $s<1$ such that for every $t<1$ with 
  $s\leq t$, and for every $f\in C_0(X)$ $$F((\rho_1\circ r_s)\cdot f)=F((\rho_1\circ r_t)\cdot f).$$ 
  Define $$s(F)(f)=F((\rho_1\circ r_s)\cdot f)$$
  and since $s(F)$ is defined with multiplication by a continuous map, and $s(F)$ is continuous for every $F$, then $s$ is well defined. To see that $s$ is continuous consider a convergent sequence in $\can$ 
   $$(F_k)_k\rightarrow F\in \can.$$ Each $F_k$ has eigenvalues $x_i^k$, each sequence $(x_i^k)$ cannot converge to 
  $x_0$ because this implies that $F\in\cn$, and therefore there is a $t\in[0,1)$ such that for every $k$ and for $f\in C_0(X)$ 
  $$s(F_k)(f)=F_k((\rho_1\circ r_s).f)\text{ and } s(F)(f)=F((\rho_1\circ r_s).f),$$where the result of $s$ applied to this sequence is obtained
  by multiplying 
  by a continuous function, and hence $s(F_k)$ converges to $F$, i.e the map $s$ is continuous. 
  
  We have that $p\circ i= id_{\can}$, and as a consequence 
  \begin{multline*}  
  \hspace{1cm}p_*:\pi_i^{G_b}(p^{-1}(\can),p^{-1}(b))\longrightarrow\\ \pi_i^{G_b}(\can,b)
  \end{multline*}
  is surjective. To see that $p_*$ is injective, let $$g:(D^i,\partial D^i)\rightarrow p^{-1}(\can)$$be a representative element of 
  $\Ker(p_*)$. Therefore $p\circ g\simeq_{G_b} b$.

  Let $$\gamma:(D^i,\partial D^i)\times I\rightarrow \can$$ be a map such that $\gamma(-,0)=p\circ g$ and $\gamma(-,1)=b$. We define 
  \begin{align*}
  \widetilde{\gamma}:(D^i,\partial D^i)\times I&\longrightarrow (p^{-1}(\can),p^{-1}(b))\\
  (a,t)&\longmapsto\widetilde{\gamma}(a,t)(f)=\gamma(a,t)(\rho_1.f)+g(a)(\rho_2.f) .\end{align*}This is a homotopy that starts in $g$ and ends in an element of 
  $p^{-1}(b)$, so
  $[g]=0$ in $\pi_i^{G_b}(p^{-1}(\can),p^{-1}(b))$, hence the kernel is trivial.
  
  Then using lemma \ref{edold}(1) we conclude that $p\mid p^{-1}(\c^{n+1}(X/A,G))$ is a $G$-quasifibration.
  \end{proof}
  Using Lemma \ref{edold}(2) together with Lemma \ref{filtration1} we conclude that the map 
 $$p_*:\c(X,G)\longrightarrow \c(X/A,G)$$ is a $G$-quasifibration, it proves Theorem \ref{quasifib1}. Finally using Proposition \ref{lesquasi}, we have proved that $\underline{k}_*^G$ satifies the long exact sequence axiom, and then $\underline{k}_*^G$ is a $G$-homology theory.
\end{proof}
\section{Equivariant connective K-homology}\label{section2.1}
So far we have proved that the functor $\underline{k}_*^G$ is a $G$-homology theory for every finite group $G$. Now we will define a natural transformation $\mathfrak{A}$ from $\underline{k}_*^G$ to equivariant K-homology
 such that the map
   $$\underline{k}^G_n(G/H)=[S^n,\c(G/H,G)]^G \xrightarrow{\ \mathfrak{A}(G/H)_n\ }K_n^G(G/H)\text{ for }n\geq0$$
   is an isomorphism. Let us start with some preliminaries.
  \subsection{Equivariant KK-theory}Atiyah proved that elliptic operators between sections of two vector bundles $E\to X$ and $F\to Y$ give rise to maps between K-theory groups of $X$ and $Y$ (see for example \cite{atiyahbottper}). Kasparov extend this idea to a \textit{generalized elliptic operator} (see Definition \ref{generalizedelliptic}). Given two C$^*$-algebras $C$ and $B$, a generalized elliptic operator $\eta$ defined between Hilbert modules over $C$ and $B$ induces a map in K-theory
  $$K_*(B)\xrightarrow{\ -\sharp\eta\ }K_*(C).$$ 
 The study of the homotopy classes of generalized elliptic operators (see Definition \ref{homotopyKK}) is very important in non commutative topology and in index theory (for a good introduction to the subject see for example \cite{higsonprimer}). We will use the properties of this homotopy classes, in particular a convenient form of Bott periodicity in order to construct the natural transformation $\mathfrak{A}$. In this section we follow \cite[Chapter VIII]{bl98}.
  \begin{defin}A $\Z/2\Z$-graded $G$-C$^\ast$-algebra $C=C^0\oplus C^1$, is a C$^\ast$-algebra equipped with an action of $G$ by $^\ast$-automorphisms preserving the grading. Given two $\Z/2\Z$-graded $G$-C$^*$-algebras $C$ and $B$, a map $\phi:C\to B$ is called a $G$-*-homomorphism if it is $G$-equivariant and a *-homomorphism.
  \end{defin}
 Given a $\Z/2\Z$-graded $G$-C$^\ast$-algebra $B$, a $\Z/2\Z$-graded Hilbert $B$-module is a $\Z/2\Z$-graded $B$-module with a $\Z/2\Z$-graded $B$-valued inner product. Over a $\Z/2\Z$-graded Hilbert $B$-module $E$ one can define the $\Z/2\Z$-graded $G$-C$^\ast$-algebra of \emph{bounded operators} denoted by $\mathfrak{B}(E)$ and the $\Z/2\Z$-graded $G$-C$^\ast$-algebra of \emph{compact operators} denoted by $\mathfrak{K}(E)$, in both cases we take the usual grading coming from $E$. For precise definitions see \cite[Secc. VI.13]{bl98}.
 
  \begin{defin}\label{generalizedelliptic}
 Let $C$ and $B$ be ($\Z/2$-) graded $G$-C$^\ast$-algebras. We denote by $\E_G(C,B)$ to the set
  of \textit{Kasparov $G$-modules} (or generalized $G$-elliptic operators) for $(C, B)$, that is the set of triples $(E,\phi,F)$ such that
  \begin{enumerate}
  \item $E$ is  a graded countably generated Hilbert $B$-module with a continuous $G$-action.
  \item $\phi:C\rightarrow \mathfrak{B}(E)$ is a graded *-
  homomorphism. 
  \item $F$ is a $G$-continuous
  operator in $\mathfrak{B}(E)$ of degree 1, 
  such that for every $c \in C$ and $g\in G$ 
  \begin{enumerate}
  \item $F\phi(c)-\phi(c)F$, \item $(F^2 - Id)\phi(c)$, \item $(F - F^*)\phi(c)$ and \item $(g\cdot F-F)\phi(c)$\end{enumerate}
  are all in $\mathfrak{K}(E)$. 
\end{enumerate}  
  The set $\D_G(C, B)$  of degenerate Kasparov modules is the set of triples in $\E_G(C,B)$ for which 
  \begin{enumerate}
  \item $F\phi(c)-\phi(c)F=0$, \item $(F^2 - Id)\phi(c)=0$, \item $(F - F^*)\phi(c)=0$, and \item $(g\cdot F-F)\phi(c)=0$,\end{enumerate} for all $c\in C$ and $g\in G$.
  \end{defin}
  \begin{ejem}\label{fundamental}
  Let $\phi: C\rightarrow \mathfrak{K}(\hu_G)$ be a graded $G$-*-homomorphism. Then
  $(\hu_G,\phi,0)$ is a Kasparov $G$-$(C,\C)$-module.
  \end{ejem}
  \begin{defin}\label{homotopyKK}Let $IB=C([0,1],B)$ be the C${ }^*$-algebra of continuous maps from $[0,1]$ to $B$. A \textit{homotopy} connecting $(E_0,\phi_0,F_0)$ and $(E_1,\phi_1,F_1)$ in $\E_G(C,B)$ is an
  element $(E,\phi,F)$ of $\E_G(C,IB)$ for which $(E\widehat{\otimes}_{f_i}B,f_i\circ\phi,f_{i*}(F))$ is $G$-unitary equivalent to $(E_i,\phi_i,F_i)$,
  where $f_i$, for $i = 0$, $1$, is the evaluation homomorphism from $IB$ to $B$.
  \end{defin} The notion of homotopy
  respects direct sums. Homotopy equivalence is denoted by $\sim_h$. If we have $E_0 = E_1$, then a
  \textit{standard homotopy} is a homotopy of the form $E = C([0,1],E_0)$ (which is a
  Hilbert $IB$-module in the obvious way), with  $\phi =(\phi_t)$, and $F = (F_t)$, where $t\rightarrow F_t$
  and $t\rightarrow \phi_t(c)$ are strong $G$-*-operator continuous for each $c$. 
  A standard homotopy where in addition $\phi_t$ is constant
  and $F_t$ is norm-continuous is called an \textit{operator homotopy}.
  \begin{defin} Direct sums turns $\E_G(C,B)$ into an abelian semigroup. We denote by $KK_G(C,B) $ to the set of equivalence classes
  of $E_G(C,B)$ under $\sim_h$. More generally, we set $$KK^n_G(C,B) = KK_G(C, B\otimes \C\text{liff}(n))$$ where $G$ acts trivially in 
  $\C\text{liff}(n)$.  In each case, the set is an abelian semigroup under direct sum.
  \end{defin}
  The bifunctor $KK_G^n(-,-)$ gives us a convenient description of equivariant K-homology that we will use to define the natural transformation.  We have the following result.
  \begin{prop}[Corol. 18.5.4 in \cite{bl98}]\label{KK}
  There are natural isomorphisms 
  \begin{align*}
  \widetilde{K}^n_G(X)& \cong KK_G^n(\C,C_0(X))\text{ and }\\   
  \widetilde{K}^G_n(X)& \cong KK_G^n(C_0(X),\C).\end{align*}
  \end{prop}
  We need also a result about the invariance of KK-theory by compact perturbations.
  \begin{prop}[Corol. 17.8.8 in \cite{bl98}]\label{compactinv}
  For any $C$ there is a natural isomorphism
  $$KK_G(C,\mathfrak{K}(\hu_G))\cong KK_G(C,\C).$$\end{prop}
  
\subsection{A natural transformation}\label{naturaltrans}In this section we define a natural transformation $\mathfrak{A}$ from the equivariant homotopy groups of the configuration space $\c(X,G)$ to the equivariant reduced K-homology groups of $X$. To this end we use the description of equivariant reduced K-homology as a KK-group given in Proposition \ref{KK}.

Given a $G$-${ }^*$-homomorphism $\phi:C\rightarrow\mathfrak{K}(\hu_G)$, an element in $KK_G(C,\C)$ can be assigned as
Example \ref{fundamental}. Thus if $X$ is $G$-connected, we have a map 
\begin{align*}\mathfrak{A}(X):\pi_0(\c(X,G)^G)&\longrightarrow KK^0_G(C_0(X),\C)\cong \widetilde{K}_0^G(X)\\
[\phi]&\longmapsto[(\mathfrak{K}(\hu_G),\phi,0)].\end{align*}
Notice that homotopy of points on $\c(X,G)$ correspond to a standard homotopy on Kasparov $G$-modules then the map $\mathfrak{A}(X)$ is well defined.
The map $\mathfrak{A}(-)$ is a natural transformation  from the functor $\pi_0(\c(-,G)^G)$ to $KK^0_G(-,\C)$. 
To extend this natural transformation to all $n\geq0$ we need 
a particular form of the Bott periodicity theorem.
\begin{defin}
For any $C$ and $B$ C${ }^*$-algebras, we denote the \textit{suspension of $C$} by $$SC= \{f:S^1\rightarrow C\mid f \text{ is continuous and }f(1)=0\}.$$With pointwise operations and sup norm. We denote by $S^nC$ to the $n$-th suspension of $C$,  $\underbrace{S(\cdots(S}_{n\ \text{times}}C))$.
\end{defin}
  \begin{teor}[Corol. 19.2.2 in \cite{bl98}]\label{bott}
  We have a natural isomorphism in $C$ and $B$, $$KK^1_G(C,B)\cong KK_G(C,SB).$$
  \end{teor}
 \begin{prop}
 There is a natural transformation $$\mathfrak{A}^n(X):\pi_n(\c(X,G)^G)\longrightarrow KK_G^n(C_0(X),\C)$$
 \end{prop}
 \begin{proof}
 
 For $n=0$ it was already defined. For $n=1$ consider an element $[l]\in\pi_1(\c(X,G)^G)$ with $l:S^1\longrightarrow\c(C_0(X),G)^G$. Given $f\in C_0(X)$ and $t\in S^1$, we have $l(t)[f]\in\FR_n(\hu_G)\subseteq\mathfrak{K}(\hu_G)$, for some $n\geq0$. 

As the topology is the compact-open topology, we have a continuous map $$l(-)[f]:S^1\longrightarrow\mathfrak{K}(\hu_G). $$ It is an element of $S(\mathfrak{K}(\hu_G))$. Then we have defined a continuous map 
\begin{align*}
\Omega\c(X,G)^G&\stackrel{\ \mathcal{A}\ }{\longrightarrow}\Hom^*(C_0(X),S(\mathfrak{K}(\hu_G)))\\
t&\stackrel{\ \phantom{\mathcal{A}}\ }{\longmapsto}(f\mapsto l(-)[f]).
\end{align*}
To every element $\phi\in \Hom^*(C_0(X),S(\mathfrak{K}(\hu_G)))^G$ we can associate the Kasparov module $(S(\mathfrak{K}(\hu_G)),\phi,0)\in\mathbb{E}_G(C_0(X),S(\mathfrak{K}(\hu_G)))$. 

Composing the above two maps and taking homotopy classes we have a  
homomorphism 
$$\pi_1(\c(X,G)^G)\longrightarrow KK_G(C_0(X),S(\mathfrak{K}(\hu_G))).$$
Notice that the homotopy of two paths $l_0,l_1\in \Omega\c(X,G)^G$ correspond to a standard homotopy of the Kasparov modules $$(S(\mathfrak{K}(\hu_G)),\mathcal{A}(l_0),0),\ (S(\mathfrak{K}(\hu_G)),\mathcal{A}(l_1),0)\in\mathbb{E}_G(C_0(X),S(\mathfrak{K}(\hu_G))),$$then the above map is well defined. If we suppose that $X$ is $G$-connected, using the identification given by Theorem \ref{bott},we have defined the natural transformation 

$$\mathfrak{A}^1(X):\pi_1(\c(X,G)^G)\longrightarrow KK_G^1(C_0(X),\C).$$
For every $n\geq0$ and for every $X$ (non necessary $G$-connected) is defined as the following composition

\begin{align*}
\pi_{n+1}(\c(\Sigma X,G)^G)&\longrightarrow [\Hom^*(C_0(\Sigma X),S^{n+1}(\mathfrak{K}(\hu_G)))^G]\\
                           &\longrightarrow KK_G(C_0(\Sigma X),S^{n+1}(\mathfrak{K}(\hu_G)))\\
                           &\xrightarrow{\,\,\cong\,} KK_G^{n+1}(C_0(\Sigma X),\C)\\
                           &\xrightarrow{\,\,\cong\,} KK_G^n(C_0(X),\C).
\end{align*}
where the first isomorphism is given by Theorem \ref{bott} and Prop \ref{compactinv} and the last one is given by the suspension isomorphism.
\end{proof}


The last construction gives a relation between the homotopy groups of the configuration space and the analytic construction of 
KK-theory by Kasparov.
\begin{teor}\label{s1t}
 The map $$\mathfrak{A}^n(S^0):\pi_n(\c(S^0,G)^G)\longrightarrow KK^n_G(C_0(S^0),\C)=KK^n_G(\C,\C)$$is an isomorphism for every $n\geq0$, where $S^0$ is endowed with the trivial $G$-action.
\end{teor}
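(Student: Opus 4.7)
The plan is to reduce the theorem to Segal's non-equivariant Theorem \ref{main1} via the isotypical decomposition of the complete $G$-universe $\hu_G$. Since $G$ acts trivially on $S^1$, an element of $\c(S^1,G)^G$ is a $*$-homomorphism $\phi:C_0(S^1)\to FR(\hu_G)^G$. Writing $\hu_G\cong\bigoplus_\rho V_\rho\otimes M_\rho$ into $\rho$-isotypical components, with $\rho$ running over the irreducible complex $G$-representations and each multiplicity space $M_\rho$ a separable infinite-dimensional Hilbert space carrying the trivial $G$-action, Schur's lemma gives $FR(\hu_G)^G\cong\bigoplus_\rho\mathrm{id}_{V_\rho}\otimes FR(M_\rho)$. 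Consequently $\phi$ decomposes canonically as $\phi=\bigoplus_\rho\mathrm{id}_{V_\rho}\otimes\phi_\rho$, with only finitely many $\phi_\rho:C_0(S^1)\to FR(M_\rho)$ nonzero. Passing to $G$-equivariant homotopy this yields
\[
\pi_n^G(\c(S^1,G))\;\cong\;\bigoplus_\rho\pi_n(\c_\rho(S^1)),
\]
where $\c_\rho(S^1)$ denotes Segal's non-equivariant configuration space built from the ambient Hilbert space $M_\rho$.

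Each $\c_\rho(S^1)$ is homotopy equivalent to Segal's original $\c(S^1)$ because $M_\rho$ is isomorphic to the standard separable Hilbert space. By Theorem \ref{main1}, $\pi_n(\c_\rho(S^1))\cong\widetilde{K}_n(S^1)$, which equals $\Z$ for $n$ odd and $0$ for $n$ even. Summing over $\rho$, the source of $\mathfrak{A}^n(S^1)$ is $R(G)$ in odd degrees and $0$ in even degrees. On the target side, equivariant Bott periodicity (Theorem \ref{bott}) yields $KK^n_G(C_0(S^1),\C)\cong KK^{n-1}_G(\C,\C)$, and the standard identifications $KK^0_G(\C,\C)=R(G)$, $KK^1_G(\C,\C)=0$ produce the same abstract answer. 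So source and target of $\mathfrak{A}^n(S^1)$ agree as abelian groups.

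It remains to verify that $\mathfrak{A}^n(S^1)$ actually realizes this matching identification. For each irreducible $\rho$ with a chosen embedding $V_\rho\hookrightarrow\hu_G$, I would consider the tautological map $\sigma_\rho:S^1\to\c(S^1,G)^G$ sending $x\ne x_0$ to the single-point configuration $\{(x,V_\rho)\}$. Under the isotypical decomposition, $\sigma_\rho$ corresponds, in the $\rho$-summand, to Segal's canonical generator of $\pi_1(\c_\rho(S^1))$ placed alongside the $V_\rho$-tensor factor. Unwinding the construction of $\mathfrak{A}^1$ through the Bott isomorphism of Theorem \ref{bott}, and using Segal's identification of his generator with the Bott element of $KK^1(C_0(S^1),\C)$, one sees that $\mathfrak{A}^1(\sigma_\rho)=[\rho]\in R(G)\cong KK^1_G(C_0(S^1),\C)$. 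Since $\{[\rho]\}_{\rho}$ and $\{\sigma_\rho\}_{\rho}$ are $\Z$-bases of target and source respectively, $\mathfrak{A}^1(S^1)$ is an isomorphism, and the cases $n\ge 2$ follow from $n=1$ by naturality of $\mathfrak{A}^n$ with respect to Bott periodicity on both sides. The main obstacle is this last compatibility step: one must carefully track $\mathfrak{A}^1$ through the isotypical decomposition and check both that Segal's non-equivariant generator is sent to the Bott element and that the $V_\rho$-twist contributes exactly $[\rho]$ on the $KK$-side.
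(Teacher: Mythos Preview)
Your argument is correct and takes a genuinely different route from the paper. The paper proceeds by rewriting $\c(S^q,G)$ as the space $F_q^G$ of tuples $(A_0,\ldots,A_q)$ of commuting normal finite-rank operators on $\hu_G$ satisfying $\sum A_i^2=1$ (using that $\C[t_0,\ldots,t_q]/(\sum t_i^2-1)$ is dense in $1+C_0(S^q)$), and then maps this homeomorphically to the Atiyah--Singer space $\Phi_q^G$ of finite-rank self-adjoint odd operators on $\cl(q)\otimes\hu_G$ via $(A_0,\ldots,A_q)\mapsto A_0+A_1e_1+\cdots+A_qe_q$. Since $\Phi_q^G$ represents $K_G^q$ for $q>0$, the comparison with $KK$-theory follows. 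Your approach instead exploits the triviality of the $G$-action on $S^1$ to split $\c(S^1,G)^G$ isotypically as $\prod_\rho\c_\rho(S^1)$ and then invokes Segal's non-equivariant theorem on each factor. The paper's method is more uniform in $q$ and ties directly into the Clifford/Fredholm picture of equivariant $K$-theory, while yours is more elementary and makes the identification $\pi_*^G(\c(S^1,G))\cong R(G)\otimes\widetilde{K}_*(S^1)$ transparent; the price you pay, as you note, is the bookkeeping required to check that $\mathfrak{A}^1$ carries $\sigma_\rho$ to $[\rho]$, whereas the paper's homeomorphism $F_q^G\cong\Phi_q^G$ sidesteps any generator-by-generator verification.
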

\begin{proof}

First we will prove that $\mathfrak{A}^0(S^0)$ is an isomorphism. To prove surjectivity consider a Kasparov $G$-module $\alpha=(\hu_G,\phi,F)$ in $\E_G(\C,\C)$. The Hilbert space $\hu_G$ is a $\Z_2$-graded $G$-space and the map $\phi(1)$ is a projection of degree 0, which means that $\hu=\hu_G^0\oplus\hu_G^{1,op}$ and $\phi(1)=diag(P,Q)$ for projections $P$ and $Q$; The operator $F$ has the form
$\begin{pmatrix}0 &S\\T &0 \end{pmatrix}.$ The Kasparov module is $KK_G$-equivalent to $(\widetilde{P}\hu_G^0,Id_n,0)$ (for details on this argument the reader can refer to \cite[Example 17.3.4]{bl98}), where $\widetilde{P}\hu_G^0$ is a complex $n$-dimensional $G$-representation. To prove injectivity note that the map $Id_n:\C\mapsto\mathfrak{K}(\widetilde{P}\hu_G^0)$ is the $H$-map sending 1 to the identity matrix, and each of these modules constitutes different elements of $KK_G(\C,\C)$. It proves that $\mathfrak{A}^0(S^0)$ is an isomorphism. If $n$ is odd there is nothing to prove because both groups $\k_n^G(S^0)$ and $\widetilde K_n^G(S^0)$ are zero. If $n$ is even we have the commutative diagram
$$\xymatrix{\k_n^G(S^0)\ar[d]\ar[r]^{\mathfrak{A}^n(S^0)} & \widetilde{K}_n^G(S^0)\ar[d]\\
\k_0^G(S^0)\ar[r]^{\mathfrak{A}^0(S^0)} & \widetilde{K}_0^G(S^0)}$$
where the vertical arrows are Bott periodicity, and $\mathfrak{A}^0(S^0)$ is an isomorphism, hence $\mathfrak{A}^n(S^0)$ is an isomorphism also.
\end{proof}
\section{Induction structure}\label{section3}
In this section we prove that the equivariant connective K-homology has an induction structure in the sense of Definition 
\ref{induction}. 
Let $\alpha:H\rightarrow G$ be a group homomorphism (where $H$ and $G$ are finite). Let $X$ be an $H$-space such that $\Ker(\alpha)$ acts 
freely on $X$. There is a map 
\begin{align*}i_\alpha:X\longrightarrow&  \hspace{0.1cm}G\times_\alpha X\\
x\longmapsto & \hspace{0.1cm}[e,x].\end{align*}
We start when the map $\alpha$ is an inclusion. Using the induction structure in this case it can be proved that the natural transformation $\mathfrak{A}$ defined in Section \ref{naturaltrans} is an equivalence.
\subsection{Equivalence with equivariant connective K-homology}
  \begin{lema}\label{ind1}Given a subgroup $H\subseteq G$ and $X$ an $H$-CW-complex then the inclusion $i:X\longrightarrow G\times_HX$ induces an isomorphism
  $$i_*:\k_*^H(X)\longrightarrow\k_*^G(G\times_HX).$$
  \end{lema}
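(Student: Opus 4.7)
The plan is to realize $i_*$ concretely at the level of configuration spaces and then deduce that it is an isomorphism by combining a universe-comparison argument with the standard change-of-group adjunction.

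First, I would construct an $H$-equivariant map $\widehat{\imath}:\c(X,H)\to\c(G\times_H X,G)$ by combining two canonical $H$-maps. The closed $H$-equivariant inclusion $i:X\to G\times_H X$, $x\mapsto[e,x]$, induces by restriction a surjective $H$-equivariant $*$-homomorphism $i^*:C_0(G\times_H X)\to C_0(X)$. On the operator side, extension by zero gives an $H$-equivariant isometric embedding $L^2(H)\hookrightarrow L^2(G)$, hence $\hu_H\hookrightarrow\hu_G$, and thus an $H$-equivariant inclusion $\iota:FR(\hu_H)\hookrightarrow FR(\hu_G)$. Set $\widehat{\imath}(F):=\iota\circ F\circ i^*$. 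By the universal property of induction this extends to a $G$-equivariant map
$$\widetilde{\imath}:G\times_H\c(X,H)\longrightarrow\c(G\times_H X,G),\quad[g,F]\mapsto g\cdot\widehat{\imath}(F),$$
which is the geometric avatar of $i_*$.

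Second, I would prove that $\widetilde{\imath}$ is a $G$-weak equivalence. Geometrically, each $G$-orbit on $G\times_H X$ meets the slice $i(X)$ in exactly one $H$-orbit, so a $G$-equivariant configuration on $G\times_H X$ is completely determined, up to the $G$-action, by its restriction to $i(X)$, which is an $H$-equivariant configuration on $X$ whose labels now live in $\hu_G|_H$. By Peter-Weyl, $L^2(G)|_H$ decomposes as a sum of copies of $L^2(H)$ over $H\backslash G$, so $\hu_G|_H$ is a sum of $|G:H|$ copies of the complete $H$-universe $\hu_H$, and both are complete $H$-universes. The standard Segal-style contractibility of the space of $H$-equivariant isometric embeddings of any finite-dimensional $H$-representation into a complete $H$-universe then implies that $\iota$ induces an $H$-equivariant homotopy equivalence on each finite-rank stratum, compatibly with the filtration used in Lemma~\ref{filtration1}; passing to the colimit and combining with the slice observation gives the desired $G$-weak equivalence.

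Finally, I would invoke the general induction isomorphism for equivariant homotopy groups of induced $G$-spaces, i.e.\ the natural isomorphism $\pi_n^G(G\times_H Y)\cong\pi_n^H(Y)$ coming from the adjunction between induction and restriction on based $H$-/$G$-maps. Taking $Y=\c(X,H)$ and chaining with $\widetilde{\imath}_*$ yields
$$\k_n^H(X)=\pi_n^H(\c(X,H))\;\cong\;\pi_n^G(G\times_H\c(X,H))\;\xrightarrow{\widetilde{\imath}_*}\;\pi_n^G(\c(G\times_H X,G))=\k_n^G(G\times_H X),$$
and unwinding the construction of $\widehat{\imath}$ confirms that this composite agrees with the natural transformation $i_*$. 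The main obstacle is the second step: one must track simultaneously the combinatorial rearrangement of configurations across cosets of $H\backslash G$ and the $H$-equivariant universe change $\hu_H\hookrightarrow\hu_G|_H$, and ensure that both can be carried out continuously in the compact-open topology while respecting the finite-rank filtration, so that the resulting colimit is genuinely a $G$-weak equivalence and not merely a level-wise one.
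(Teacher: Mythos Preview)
Your final step contains a genuine error: the ``adjunction'' isomorphism $\pi_n^G(G\times_H Y)\cong\pi_n^H(Y)$ is false for unstable equivariant homotopy groups. The induction/restriction adjunction gives $[G\times_H Y,Z]_G\cong[Y,Z|_H]_H$, i.e.\ it simplifies $G$-maps \emph{out of} an induced space, not maps \emph{into} one. Since here $\pi_n^G(W)=[S^n,W]_G=\pi_n(W^G)$, what you would need is a description of $(G\times_H Y)^G$; but the $G$-map $G\times_H Y\to G/H$ shows that when $H\neq G$ the only $G$-fixed point of $G_+\wedge_H Y$ is the basepoint, so $\pi_n^G(G\times_H Y)=0$. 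This immediately falsifies step~2 as well: your map $\widetilde{\imath}$ cannot be a $G$-weak equivalence, because on $G$-fixed points its source is a point while its target $\c(G\times_H X,G)^G$ is highly nontrivial. (What is true is $\pi_n^G(\mathrm{Map}_H(G,Y))\cong\pi_n^H(Y)$, i.e.\ the analogue for \emph{co}induction; identifying induction with coinduction requires a Wirthm\"uller-type input you have not supplied, and it is not available at this unstable stage.)

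The paper avoids this by working directly at the level of fixed-point spaces. After observing that $\hu_G$ restricted to $H$ is again a complete $H$-universe, so that $\c(X,H)^H$ may be computed with $\hu_G$ in place of $\hu_H$, it writes down an explicit \emph{invariantization} map
\[
i_*(F)(f)=\frac{1}{|H|}\sum_{g\in G} g\cdot\bigl(F((g^{-1}\cdot f)\!\mid_X)\bigr)
\]
from $\c(X,H)^H$ to $\c(G\times_H X,G)^G$, together with an explicit inverse $\chi(F)=F\circ\mu$ coming from extension-by-zero $\mu:C_0(X)\to C_0(G\times_H X)$; a short direct computation shows these are mutually inverse homeomorphisms. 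Your universe-comparison observation (that $\hu_G|_H$ is a complete $H$-universe) is exactly the first ingredient the paper uses, but the rest of the argument must take place on fixed points, not on the induced $G$-space $G\times_H\c(X,H)$.
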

  \begin{proof}
The Hilbert space  $\hu_G$ can be considered as a complete $H$-universe and then $\hu_G$ is isomorphic as a $H$-module with $\hu_H$, therefore we can suppose that $$\c(X,H)^H=\bigcup_{n\geq0}\Hom^*(C_0(X),\FR_n(\hu_G))^H.$$
  We define a map 
  \begin{align*}
  i_*:\bigcup_{n\geq0}\Hom^*(C_0(X),\FR_n(\hu_G))^H & \longrightarrow \bigcup_{n\geq0}\Hom^*(C_0(G\times_HX),\FR_n(\hu_G))^G\\
  F &\longmapsto  i_*(F)=\frac{1}{|H|}\sum_{g\in G}g\cdot(F(g^{-1}\cdot f\mid_X),
  \end{align*}
  where $f\in C_0(G\times_HX)$.  On the other hand we define a map 
  \begin{align*}
  \chi:\bigcup_{n\geq0}\Hom^*(C_0(G\times_HX),\FR_n(\hu_G))^G&\longrightarrow \bigcup_{n\geq0}\Hom^*(C_0(X),\FR_n(\hu_G))^H\\
  F&\longmapsto  F\circ\mu,
  \end{align*}  
  where $\mu:C_0(X)\longrightarrow C_0(G\times_HX)$ is a continuous, $H$-equivariant map such that $\mu(f)\mid_X=f$
  and $\mu(f)|_{G\times_HX- X}=0$ for every $f\in 
  C_0(X)$.
  We have that $$\chi\circ i_*=id_{\c(X,H)^H},$$ since $f\in C_0(X)$, 
  \begin{align*}
  \chi(i_*(F))(f)&=i_*(F)(\mu(f))\\&=\frac{1}{|H|}\sum_{g\in G}g\cdot(F((g^{-1}\cdot\mu(f))|_X))\\
 & =\frac{1}{|H|}\sum_{h\in H}h\cdot F(h^{-1}\cdot f)\\&=F(f).
  \end{align*}
  Furthermore, $i_*\circ\chi= id_{\c(G\times_HX,G)^G}$ because given $f\in C_0(G\times_HX)$, 
  \begin{align*}
  i_*(\chi(F))(f)&=\frac{1}{|H|}\sum_{g\in G}g\cdot(\chi(F)((g^{-1}\cdot f)|_X))\\
 & = \frac{1}{|H|}\sum_{g\in G}g\cdot(F(\mu((g^{-1}\cdot f)|_X )))\\&=\frac{1}{|H|}\sum_{g\in G}F(g\cdot((\mu(g^{-1}\cdot f)|_X )))\\&=F(f),
 \end{align*}
  where the last equality is  a consequence of $f=\frac{1}{|H|}\sum_{g\in G}g\cdot\mu((g^{-1}\cdot f)|_X)$.
  Taking homotopy groups we obtain the desired result. 
    \end{proof}
As a consequence of the above lemma we obtain the following theorem.
 \begin{teor}\label{equivalencia}
  The functor $\k^?_*$ is naturally equivalent to $ \ku_*^?$; that is, the equivariant homotopy groups of $\c(X,G)$ are isomorphic to the equivariant reduced connective  K-homology groups of $X$ when $X$ is a finite $G$-CW-complex and $G$ is finite.
  \end{teor}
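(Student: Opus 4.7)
The strategy is to identify $\k_*^?$ with the equivariant connective K-homology $\ku_*^?$ by verifying its characterizing properties and invoking uniqueness. The previous section already established that $\k_*^G$ is a reduced $G$-homology theory for every finite $G$, and Lemma \ref{ind1} supplied the induction iso for a subgroup inclusion. Two things remain to complete the identification: (i) extend the induction structure to an arbitrary homomorphism $\alpha\colon H\to G$ with $\ker(\alpha)$ acting freely, and verify its three compatibility axioms; and (ii) upgrade Theorem \ref{s1t} to show the natural transformation $\mathfrak{A}$ is an iso on all homogeneous spaces $G/H$ in non-negative degrees.

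For (i), I would factor $\alpha$ as a surjection $\pi\colon H\twoheadrightarrow\alpha(H)$ followed by an inclusion $j\colon\alpha(H)\hookrightarrow G$, so that only the surjective case needs new work. For $\pi$ with $K=\ker(\pi)$ acting freely on $X$, the natural identification $\hu_H^K\cong\hu_{H/K}$ (coming from $L^2(H)^K\cong L^2(H/K)$) and the equality $C_0(X)^K=C_0(X/K)$ (valid because $K$ acts freely) allow one to show that restriction to $K$-invariants defines an $H/K$-equivariant homotopy equivalence between $\c(X,H)^K$ and $\c(X/K,H/K)$. Taking further $H/K$-homotopy groups yields the desired induction isomorphism, and composing with Lemma \ref{ind1} takes care of general $\alpha$. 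The three axioms of Definition \ref{induction} are then verified directly: compatibility with the boundary is the naturality of all the constructions in pairs; functoriality follows from comparing the explicit fixed-point descriptions of $\text{ind}_\alpha$ for a composition $\beta\circ\alpha$; and compatibility with conjugation by $g\in G$ holds because $g$ acts trivially on $G$-fixed elements of $\c(X,G)^G$.

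For (ii), once the induction structure is in place, for every subgroup $H\le G$ the iso of Lemma \ref{ind1} identifies $\k_*^G((G/H)_+)$ with $\k_*^H(S^0)$, and the same identification holds for equivariant K-homology; the natural transformation $\mathfrak{A}$ respects these identifications by the KK-theoretic definition. Theorem \ref{s1t} together with the long exact sequence and wedge axioms, which both $\k_*^?$ and $K_*^?$ satisfy, then shows by iterated suspension that $\mathfrak{A}$ is an iso on every sphere $S^n$ with trivial action for $n\ge 1$. Combined with induction, this gives $\mathfrak{A}$ is an iso on $G/H$ in all positive degrees. Since $\k_n^G$ vanishes for $n<0$ by construction (it is a homotopy group of a space), both defining properties of $\ku_*^?$ are met, and uniqueness of the connective theory forces $\k_*^?\cong\ku_*^?$.

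The hard part is step (i), and in particular the surjective case: one must set up the identifications between $\hu_H$ and $\hu_{H/K}$ so they are simultaneously compatible with the $C^*$-algebra structure on $FR$, with the full $H$-action, and with the pairing between configurations and test functions in $C_0(X)$; once this is in place the axioms and the final comparison of theories reduce to formal manipulations with what is already proved.
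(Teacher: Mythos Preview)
Your strategy is sound and your step~(ii) is essentially the paper's argument, but you are doing more work than the theorem requires. The paper's proof of Theorem~\ref{equivalencia} uses only Lemma~\ref{ind1}---induction along \emph{inclusions}---together with Theorem~\ref{s1t}. Since both $\k_*^G$ and $\widetilde K_*^G$ are already known to be $G$-homology theories and a finite $G$-CW-complex is built from cells of the form $S^k\times G/H$, it suffices to check that $\mathfrak{A}$ is an isomorphism on $S^1\wedge G/H$. The commuting square
\[
\begin{diagram}
\node{\k^H_*(S^1)}\arrow{e,t}{\mathfrak{A}(S^1)}\arrow{s,r}{i_*}\node{\widetilde K_*^H(S^1)}\arrow{s,r}{i_*}\\
\node{\k_*^G(S^1\wedge G/H)}\arrow{e,t}{\mathfrak{A}(S^1\wedge G/H)}\node{\widetilde K_*^G(S^1\wedge G/H)}
\end{diagram}
\]
with vertical maps the induction isomorphism of Lemma~\ref{ind1} on the left and the standard induction isomorphism for equivariant K-homology on the right, gives this directly from Theorem~\ref{s1t}. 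No general homomorphism $\alpha$ is needed, nor is any appeal to a uniqueness property of connective covers.

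The full induction structure that you outline in step~(i) is established by the paper \emph{after} Theorem~\ref{equivalencia}, in Lemma~\ref{ind2} and Theorem~\ref{iluck}. Moreover, the surjective case there is not handled by the direct fixed-point homotopy equivalence $\c(X,H)^K\simeq\c(X/K,H/K)$ you sketch; instead the paper constructs an explicit map $\chi$ and proves it is a weak equivalence by a Mayer--Vietoris induction over a $G$-cover of $X$ by pieces equivariantly equivalent to $G$, the base case $X=G$ being reduced to Lemma~\ref{ind1}. So the step you correctly identify as the hard one is in fact postponable for the purposes of this theorem, and the paper chooses to postpone it.
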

 \begin{proof}
  We already proved in Theorem \ref{s1t} that the natural transformation $\mathfrak{A}$ defined in the section above is an isomorphism when $X=S^0$ with a 
  trivial $G$-action. To prove the theorem it is enough to prove that $\mathfrak{A}$ is an isomorphism when $X=S^0\wedge G/H=(G/H)_+$ with trivial 
  $G$-action over $S^0$ and the usual $G$-action over $G/H$. It is so because we proceed by cellular induction. Consider the commutative diagram
  $$\xymatrix{
  \k^H_*(S^0)\ar[rr]^{\mathfrak{A}(S^0)}\ar[d]^{i_*}&&\widetilde{K}_*^H(S^0)\ar[d]^{i_*}\\
  \k_*^G(S^0\wedge G/H)\ar[rr]^{\mathfrak{A}(S^0\wedge G/H)}&&\widetilde{K}_*^G(S^0\wedge G/H)}$$
  where $i_*:\k^H_*(S^0)\rightarrow \k_*^G(S^0\wedge G/H)$ is the isomorphism obtained in Lemma \ref{ind1} and $i_*:\widetilde{K}^H_*(S^0)\rightarrow \widetilde{K}_*^G(S^0\wedge G/H)$
  is the isomorphism obtained from the induction structure for equivariant K-homology (see \cite{lu2002}). From this diagram we obtain that the map
  $$\mathfrak{A}(S^0\wedge G/H):\k_*^G(S^0\wedge G/H)\rightarrow \widetilde{K}_*^G(S^0\wedge G/H)$$is an isomorphism.
 \end{proof}
 As a corollary of Theorem \ref{equivalencia} we can construct a model for equivariant connective K-theory  spectrum for finite groups.

 Consider the sequence of spaces $$
 \c_n^G=\begin{cases}\c(S^n,G)&\text{ if } n>0\\
        \Omega \c(\Sigma S^n,G)& \text{ if }n=0\\
        \{pt\} &\text{ if } n<0\end{cases}.$$Define the structure maps as follows. For $n>0$, let $F\in\c_n^G=\c(S^n,G)$, we define an element 
 $$\sigma^n(F)\in\Omega\c(S^{n+1},G).$$ Notice that if $t\in S^1$ and $f\in C_0(S^{n+1})$ we have $f(t,-)\in C_0(S^n)$, then we define
 $$[\sigma^n(F)](t)[f]=F(f(t,-)).$$
It defines a continuous map
\begin{align*}
\sigma^n:\c(S^n,G)&\ \longrightarrow\Omega\c(S^{n+1},G)\\
F&\ \longmapsto\big(t\mapsto(f\mapsto F(f(t,-)))\big).
\end{align*} 
The maps $\sigma^n$ are weak $G$-homotopy equivalences because taking homotopy classes the maps $\sigma$ corresponds with the suspension isomorphism for $\k_*^G$. For $n=0$ the structre map is defined in a similar way. Then $(\c_n^G,\sigma^n)$ is a $\Omega$-$G$-spectrum.
\begin{teor}

  The $\Omega$-$G$-spectrum $(\c_n^G,\sigma^n)$ is a $G$-spectrum representing equivariant connective
 K-theory.
\end{teor}
\begin{proof}
Denote by $H_*^{\c G}$ to the reduced $G$-homology theory associated to the $\Omega$-$G$-spectrum $(\c_n^G,\sigma^n)$. We will define a natural transformation $$H_*^{\c G}\longrightarrow \k_*^G.$$ Let $(X,x_0)$ be a based $G$-CW-complex. We have a map 
\begin{align*} 
X\wedge \c_n^G& \stackrel{\ j\ }{\longrightarrow}\c(S^n\wedge X,G)\\
(x,F)& \stackrel{\ \phantom{j}\ }{\longmapsto} j(x,F)(f)=F(f(-,x))\end{align*}
for $f\in C_0(S^n\wedge X)$.

On the other hand the natural transformation $\mathfrak{A}$ defined in Section \ref{naturaltrans} gives us a well defined map 
$$\mathfrak{A}(S^n\wedge X):\pi_0(\c(S^n\wedge X,G))\longrightarrow KK_G^0(C_0(S^n\wedge X), \C),$$
and Bott isomorphism (Theorem \ref{bott}) gives us a map
$$KK_G^0(C_0(S^n\wedge X),\C)\xrightarrow{\ \cong\ }KK_G^0(C_0(X),S^n\C)\cong \widetilde{K}_n^G(X).$$
Composing the above three maps we obtain a map
$$\pi_0(X\wedge\c_n^G)\longrightarrow \widetilde{K}_n^G(X). $$
We have constructed a natural transformation $H_*^{\c G}\longrightarrow \widetilde{K}_*^G$ satisfying the conditions in Proposition \ref{connectiveversion}. Hence $H_*^{\c G}$ is naturally equivalent to $\k_*^G$ and $(\c_n^G,\sigma^n)$ is a $\Omega$-$G$-spectrum representing equivariant connective K-homology. 
\end{proof}

 \subsection{Induction structure for general homomorphisms}
To obtain the induction structure when $\alpha$ is an arbitrary map we need a lemma.
From now on we denote  the functor ${\k}_*^?$ by $\widetilde{k}_*^?$ and $\underline{k}_*^?$ by $k_*^?$. 
\begin{lema}\label{ind2}
  Let $X$ be a  $G$-CW-complex such that $N\trianglelefteq G$ acts freely in $X$. Then there is natural 
  isomorphism
  $\pi_*:\widetilde{k}_*^G(X)\longrightarrow\widetilde{k}_*^{G/N}(X/N)$ induced by the quotient map $\pi:X\longrightarrow X/N$.

\end{lema}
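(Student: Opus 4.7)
The plan is to follow the strategy of Lemma \ref{ind1}: exhibit an explicit correspondence between the fixed-point spaces $\c(X,G)^G$ and $\c(X/N,G/N)^{G/N}$ at the level of geometric configurations, and then apply $\pi_*$. The key structural observation is that $\hu_G^N$, equipped with its residual $G/N$-action, is a complete $G/N$-universe---indeed $L^2(G)^N\cong L^2(G/N)$ as $G/N$-representations since $N$ is normal---so we may realize the codomain as $Hom^*_{G/N}(C_0(X/N),FR(\hu_G^N))$ and work with $\hu_G$ as the ambient Hilbert space on both sides.

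For the forward map $\Pi$, take a $G$-equivariant $*$-homomorphism $F\colon C_0(X)\to FR(\hu_G)$ and $\bar f\in C_0(X/N)$; the pullback $\pi^*\bar f\in C_0(X)$ is $N$-invariant, so $F(\pi^*\bar f)$ is an $N$-invariant operator which preserves $\hu_G^N$. I would define $\Pi(F)(\bar f):= F(\pi^*\bar f)|_{\hu_G^N}$, a $G/N$-equivariant $*$-homomorphism. Geometrically, $\Pi$ sends a $G$-invariant configuration $\{(x_i,V_i)\}$ on $X$ to $\{(\pi(x_i),(\bigoplus_{n\in N} nV_i)^N)\}$ on $X/N$: the $|N|$ mutually orthogonal labels along each free $N$-orbit are replaced by their common $N$-fixed diagonal, which is a subspace of $\hu_G^N$ of dimension $\dim V_i$.

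For the inverse $\Sigma$, I would fix an $N$-equivariant unitary decomposition $\hu_G\cong \C[N]\otimes \hu_G^N$ (available because $\hu_G$ restricted to $N$ is a direct sum of copies of the regular representation with multiplicity space $\hu_G^N$). Given a $G/N$-invariant configuration $\{([\bar x_j],W_j)\}$ on $X/N$ with $W_j\subset \hu_G^N$, choose a lift $x_j\in X$ of each $[\bar x_j]$---possible since free-ness of $N$ makes $\pi\colon X\to X/N$ a principal $N$-bundle---and place the label $\C\delta_n\otimes W_j\subset\hu_G$ at the point $nx_j$. The $|N|$ labels per orbit are mutually orthogonal, and gluing across different $G/N$-orbits via $G$-equivariance yields a well-defined element of $\c(X,G)^G$. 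One checks directly that $\Pi\circ\Sigma=\mathrm{id}$, since $(\bigoplus_{n\in N} \C\delta_n\otimes W_j)^N\cong W_j$, while $\Sigma\circ\Pi=\mathrm{id}$ up to the choice of decomposition.

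The main subtlety is that $\Sigma$ depends on both the $N$-equivariant decomposition $\hu_G\cong \C[N]\otimes \hu_G^N$ and the local choice of lifts in the principal $N$-bundle $\pi$. However, the space of such decompositions is contractible (it is a torsor under the $N$-equivariant unitary group of $\hu_G$, which is contractible), and the $G$-equivariance condition glues locally chosen lifts into a globally defined configuration. Hence different choices yield $G$-homotopic maps, so after passing to homotopy groups $\Pi_*$ and $\Sigma_*$ become mutually inverse, producing the natural isomorphism $\widetilde{k}_*^G(X)\cong\widetilde{k}_*^{G/N}(X/N)$ induced by $\pi$.
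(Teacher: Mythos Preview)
Your forward map $\Pi$ is essentially the same construction as the paper's map $\chi$ (read in the opposite direction), and the identification $\hu_G^N\cong\hu_{G/N}$ is exactly what the paper uses. The problem is with your inverse $\Sigma$. As written, $\Sigma$ is not a continuous map of spaces: it requires, for each configuration, a choice of lift $x_j\in X$ of each $[\bar x_j]\in X/N$, and there is no global continuous section of the principal $N$-bundle $\pi$. Your remark that ``different choices yield $G$-homotopic maps'' presupposes that for \emph{some} choice one obtains a continuous map $\c(X/N,G/N)^{G/N}\to\c(X,G)^G$, but this is precisely what fails. Moreover, even for a single configuration the $G$-invariance of the output is unclear: you have only fixed an $N$-equivariant splitting $\hu_G\cong\C[N]\otimes\hu_G^N$, so for $g\in G\setminus N$ there is no reason why $g\cdot(\C\delta_e\otimes W_j)$ should equal $\C\delta_{n}\otimes W_{j'}$ for the appropriate $n$ and $j'$. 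The claim ``$G$-equivariance glues locally chosen lifts'' hides exactly this difficulty.

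The paper avoids constructing an explicit inverse altogether. It defines only one map $\chi:\c(X/N,G/N)^{G/N}\to\c(X,G)^G$ algebraically via the averaging projections $p:C_0(X)\to C_0(X)^N$ and $\bar p:\hu_G\to(\hu_G)^N$ (so no lifts are chosen), and then proves $\chi$ is a weak homotopy equivalence by a Mayer--Vietoris induction. Since $N$ acts freely, $X$ is covered by $G$-invariant opens of the form $G\times_H U$ with $U$ $G$-contractible, reducing the claim to the single orbit $G\to G/N$; there it follows from Lemma~\ref{ind1} applied to the trivial subgroup of $G$ and of $G/N$. This bootstrap uses that $\k_*^?$ has already been shown to satisfy excision, which is the real leverage you are missing. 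If you want to repair your argument, replace the pointwise construction of $\Sigma$ by this reduction to orbits.
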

\begin{proof}The algebra $C_0(X/N)$ can be identified with $C_0(X)^N$, this is the algebra of continuous maps from $X$ to $\C$ that are invariant by the action of 
$N$, this algebra has a $G$-action as a subalgebra of $C_0(X)$. With this identification lets consider the natural map
\begin{align*}C_0(X)&\stackrel{\ p\ }{\longrightarrow} C_0(X)^N\\
f&\stackrel{\ \phantom{p}\ }{\longmapsto}\frac{1}{|N|}\sum_{g\in N}g\cdot f .
\end{align*}
This allows us to define a *-homomorphism 

\begin{align*}
\bigcup_{n\geq0}\Hom^*(C_0(X)^N,FR_n(\hu_{G/N})&)^{G/N}\xrightarrow{\ \ \phi\ \ }
\bigcup_{n\geq0}\Hom^*(C_0(X),FR_n(\hu_{G/N}))\\
A&\xmapsto{\ \  \ \ \ \ \ \ \  \phantom{\phi}\ } A\circ p.
\end{align*}
On the other hand $\hu_{G/N}$ can be identified with $(\hu_G)^N$, so we can suppose that $A\circ p$ is an element of $\bigcup_{n\geq0}\Hom^*(C_0(X),FR_n((\hu_G)^N))$. First we will prove that $\phi$ is $G$-equivariant.
Given an element $g\in G$, let
$\pi:G\rightarrow G/N$ be the quotient map and $f\in C_0(X)$. Then 
\begin{align*}
\left(g\cdot (A\circ p)\right)(f)&=g\left((A\circ p)(g^{-1}\cdot f )\right)g^{-1}\\
                                  &=g(A(\pi(g^{-1})\cdot (p(f))))g^{-1}\\
                                  &=\pi(g)(A(\pi(g^{-1})\cdot(p(f))))\pi(g^{-1})\\
                                  &=A(p(f)),
\end{align*}
Where the last equality is because $A$ is $G/N$-invariant.
Hence $$A\circ p\in \bigcup_{n\geq0}\Hom^*(C_0(X), FR_n((\hu_G)^N))^G.$$ We define a map \begin{align*}\bar{p}:\hu_G&\longrightarrow (\hu_G)^N\\
                          v&\longmapsto \frac{1}{|N|}\sum_{g\in N}gv.\end{align*} Now consider the following 
commutative diagram (here $i:(\hu_G)^N\to\hu_G$ is the inclusion)
\begin{equation*}\xymatrix{
\hu_G        \ar[rr]^{i\circ A(p(f))\circ \bar{p}}\ar[d]^{\bar{p}}&&\hu_G\\
(\hu_G)^N\ar[rr]^{A(p(f))}                                                              &&(\hu_G)^N\ar[u]^{i}
}
\end{equation*}

It implies that $i\circ A( p(-)) \circ\bar{p}\in \bigcup_{n\geq0}\Hom^*(C_0(X),FR_n(\hu_G))^G$. Then we have a natural map 
\begin{align*}
\chi:\c(X/N,G/N)^{G/N}&\longrightarrow\c(X,G)^G\\
A&\longmapsto i\circ(A\circ p)\circ\bar{p}.
\end{align*}
We will prove that $\chi$ induces an isomorphism on homotopy groups.

The map $X\xrightarrow{\pi} X/N$ is a $G$-covering space. As the group is finite this implies that $X=\coprod_i \widetilde{U_i}$, where each 
$\widetilde{U_i}\cong_GG\times_HU_i$,
and $U_i$ is a $G$-contractible open set where $H$ acts trivially, $\widetilde{U_i}\simeq_G G$ and $\pi(\widetilde{U_i})\simeq_{G/N}G/N$.

 The map $\chi_*:{k}_*^{G/N}(G/N)\rightarrow{k}_*^G(G)$ is an isomorphism because we have the following commutative diagram
$$\begin{diagram}
   \node{{k}_*^G(G)}\arrow{s,r}{\chi_*}\node{{k}_*(pt)}\arrow{w,t}{i_*}\arrow{sw,r}{i_*}\\
   \node{{k}_*^{G/N}(G/N)}
  \end{diagram}$$
and both $i_*$ are isomorphisms by Lemma \ref{ind1}. Now, the result for general $X$ follows by an inductive argument using the disjoint union axiom and the decomposition $X\coprod_iU_i$.
   \end{proof}
Using the above results we can derive an induction structure. 
\begin{teor}\label{iluck}Given $\alpha:H\rightarrow G$ such that $\Ker(\alpha)$ acts freely in $X$, the map $i:X\rightarrow G\times_\alpha X$ induces a
natural isomorphism$$i_*:\widetilde{k}_*^H(X)\rightarrow \widetilde{k}_*^G(G\times_\alpha X).$$\end{teor}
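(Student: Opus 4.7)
The plan is to factor the homomorphism $\alpha$ through its image and apply the two induction lemmas already in hand. Set $N:=\ker(\alpha)\trianglelefteq H$ and let $\pi:H\to H/N$ be the projection. Since $\alpha$ factors through $\pi$, the induced map $\bar{\alpha}:H/N\to G$ is injective, so we may view $H/N$ as a subgroup of $G$ via $\bar{\alpha}$. Writing $\alpha=\bar{\alpha}\circ\pi$ exhibits $\alpha$ as a composition of a surjection onto a quotient by a normal subgroup and an inclusion of subgroups, which is exactly the structure already covered by Lemmas \ref{ind2} and \ref{ind1} respectively.

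First I would verify the homeomorphism of induced $G$-spaces
\[
G\times_{\alpha}X\;\cong\;G\times_{\bar{\alpha}}(X/N),
\]
which holds because the defining relation $(g\alpha(h),h^{-1}x)\sim (g,x)$ on $G\times X$ depends on $h$ only through $\pi(h)\in H/N$, and because $N$ acts freely on $X$ so that $X\to X/N$ is a principal $N$-bundle. Under this identification the canonical inclusion $i:X\to G\times_\alpha X$ decomposes as
\[
X\;\xrightarrow{\;\pi_X\;}\;X/N\;\xrightarrow{\;j\;}\;G\times_{\bar{\alpha}}(X/N),
\]
where $\pi_X$ is the quotient by the free $N$-action and $j$ is the inclusion sending $[x]$ to $[e,[x]]$. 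Now I would apply Lemma \ref{ind2} with the normal subgroup $N\trianglelefteq H$ acting freely on $X$ to obtain a natural isomorphism $(\pi_X)_*:\widetilde{k}_*^H(X)\xrightarrow{\cong}\widetilde{k}_*^{H/N}(X/N)$, and then Lemma \ref{ind1} with the inclusion $\bar{\alpha}:H/N\hookrightarrow G$ to obtain $j_*:\widetilde{k}_*^{H/N}(X/N)\xrightarrow{\cong}\widetilde{k}_*^G(G\times_{\bar{\alpha}}(X/N))$. The composition is a natural isomorphism $\widetilde{k}_*^H(X)\to\widetilde{k}_*^G(G\times_\alpha X)$.

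It then remains to confirm that this composition coincides with the map induced by $i:X\to G\times_\alpha X$; this is immediate from the factorization $i=j\circ\pi_X$ under the identification above, together with functoriality of $\c(\_,G)$ in the space variable. Naturality in pairs $(X,A)$ and compatibility with the extension of $\widetilde{k}_*^\bullet$ to non-connected spaces through $\widetilde{k}_*^G(X)=\pi_{*+1}^G(\c(\Sigma X,G))$ follow because both lemmas are natural in the space and the suspension commutes with induction. Finally, the compatibility axioms in Definition \ref{induction} (with boundary, with composition of homomorphisms, and with conjugation by elements of $G$) reduce to the corresponding statements for the quotient step and the inclusion step separately, which can be checked directly from the explicit homotopy equivalences constructed in Lemmas \ref{ind1} and \ref{ind2}.

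The main technical obstacle I anticipate is the identification $G\times_\alpha X\cong G\times_{\bar{\alpha}}(X/N)$ as $G$-CW-complexes at the point-set level, and in particular checking that the freeness of the $N$-action is what makes this identification a genuine homeomorphism (rather than merely a continuous bijection). Everything else is bookkeeping once the two lemmas are in place.
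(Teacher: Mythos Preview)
Your proposal is correct and follows essentially the same route as the paper: factor $\alpha$ as a surjection onto $H/\ker(\alpha)\cong\mathrm{im}(\alpha)$ followed by the inclusion into $G$, identify $G\times_\alpha X$ with the iterated induction, and invoke Lemmas~\ref{ind2} and~\ref{ind1} for the two steps. The only cosmetic difference is the order in which the chain of isomorphisms is read (you go $H\to H/N\to G$, the paper unwinds from the $G$-side back to $H$), and the paper defers the verification of the axioms in Definition~\ref{induction} to a separate discussion immediately after the theorem rather than folding it into the proof.
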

\begin{proof} If $\alpha:H\rightarrow G$ is a group homomorphism, $\alpha$ can be obtained as the composition 
$$H\xrightarrow{\ \alpha\ }\alpha(H)\xrightarrow{\ i\ }G,$$ so $G\times_HX\cong_GG\times_i(\alpha(H)\times_\alpha X)$, 
and this allows us to  obtain the following isomorphisms
$$\k_*^G(G\times_\alpha X)\cong\k_*^G(G\times_{i}(\alpha(H)\times_\alpha X)).$$
On the other hand Lemma \ref{ind1} implies

$$\k_*^G(G\times_{i}(\alpha(H)\times_\alpha X))\cong\k_*^{\alpha(H)}(\alpha(H)\times_\alpha X)$$
From the homomorphism $\alpha:H\rightarrow \alpha(H)$ we obtain an isomorphism \\$\bar{\alpha}:H/\Ker(\alpha)\rightarrow \alpha(H)$, and then 
$$\k_*^{\alpha(H)}(\alpha(H)\times_\alpha X)\cong
\k_*^{H/\Ker(\alpha)}(H/\Ker(\alpha)\times_\pi X)$$where $\pi:H\rightarrow H/\Ker(\alpha)$ is the quotient map. Finally Lemma 
\ref{ind2} implies
$$\k_*^{H/\Ker(\alpha)}(H/\Ker(\alpha)\times_\pi X)\cong\k_*^H(X).$$
\end{proof}

We will verify the properties in Definition \ref{induction}. For this we use the fact that the map defined to obtain the above isomorphism is the
 \textit{invariantization}. 
\begin{enumerate}
 \item \textbf{Compatibility with the boundary homomorphism}.
  If $p:E\rightarrow B$ is a $G$-quasifibration with fibre $F$, we have a connecting morphism 
  $$\partial_n^G:\pi_n^G(B,b)\longrightarrow \pi_{n-1}^G(F,f)$$ defined in the following way. If $[\varphi]\in\pi_n^G(B,b)$, this element can be 
  viewed as an element of $\pi_n^G(E,p^{-1}(b),x_0)\cong\pi_n^G(F,f)$ (since $p$ is a quasifibration), and the homotopy class of the  image of the map
   $\varphi$ restricted to $\partial D\cong S^{n-1}$ by the above identification that we denote by 
  $$\widetilde{\varphi}:(D^n,\partial D^n)\longrightarrow (E^G,(p^{-1}(b))^G,x_0),$$ can be viewed as an element of 
  $\pi_{n-1}^G(F,f)$. The above argument implies that the connecting morphism in this case is given by a restriction. The compatibility
  with the boundary map follows from the fact that the invariantization commutes with restrictions.

 \item\textbf{Functoriality.}
  This property follows from the fact that taking invariantization is \textit{transitive}, that means, if we have homomorphisms 
  $\alpha:H\rightarrow G$ and $\beta:G\rightarrow K$ then the invariantization map defined from  $C_0(K\times_{\beta\circ\alpha} X)$ to
  $C_0(X)$ with $X$ a $H$-space is the composition of the invariantizations defined from $C_0(K\times_{\beta\circ\alpha} X)$ to 
  $C_0(G\times_{\alpha} X)$ and  from $C_0(G\times_{\alpha} X)$ to $C_0(X)$.

 \item\textbf{Compatibility with conjugation.}
  This property follows from the fact that to conjugate and later take the invariantization is the same as to take the 
  in\-va\-rian\-ti\-za\-tion without
  conjugate.
\end{enumerate}
The above argument and Theorem \ref{iluck} proves the following theorem.
\begin{teor}
 The functor $k_*^?$ is an equivariant homology theory in the sense of Definition \ref{induction}.
\end{teor}
\begin{remark}
The induction structure for equivariant connective K-homology is a new result. Note that for periodic K-homology is possible to obtain the induction structure from the induction of the representation rings of the corresponding groups, but in the classical definition of connective K-homology (as the homotopy groups of the connective cover of the K-theory spectrum) we cannot use this correspondence. For our purposes is necessary because we want to apply the equivariant Chern character coming from of Theorem \ref{chern}.
\end{remark}
\section{The algebra $\F^q_G(X)$}\label{section4}
As is noted in \cite{gr1996} given a cohomology theory $\mathsf{H}$ defined on the category of orbifolds one can associate a commutative and cocommutative Hopf algebra
$$S=\bigoplus_{n\geq0}\mathsf{H}(X^n/\mathfrak{S}_n),$$
in \cite{gr1996} is described a generator set of $S$ and $S$ is identified with a Fock space. The case of orbifold K-theory is studied by Segal in \cite{segal1996}. He consider a $\Z\times\Z/2\Z$-graded Hopf algebra

 $$\F^q(X)=\bigoplus q^n K_{\S_n}^*(X^n)\otimes\C,$$
where $q$ is a formal variable counting the $\Z$-grading and $K_G^*(-)=K_G^0(-)\oplus K_G^1(-)$. In \cite{segal1996} is established an isomorphism between the completion at the augmentation ideal of  $\F^q(X)$ 
and the homology with complex coefficients of the configuration space $\c(X)$. More precisely.
\begin{teor}[\cite{segal1996}]Let $X$ be a $\spinc$-manifold and $H_*(\c(X);\C)$ is the complex homology endowed with the Pontryagin product. If we denote by $\widehat{(\ )}$ the completion at the augmentation ideal, then there is a $\Z\times\Z/2\Z$-graded Hopf-algebra isomorphism$$\widehat{\F^q(X)}\cong H_*(\c(X);\C).$$
\end{teor}
The goal of this section is to obtain an equivariant generalization of the above theorem. 

 Let $X$ be a topological space endowed with an action of a finite group $G$. 
We consider the \textit{wreath} product $G_n=G\wr \S_n$ which is a 
semidirect product of the $n$-th direct product $G^n$ of $G$ and the symmetric group $\S_n$. If $G$ acts on $X$ there is a natural action 
of the group $G_n$ on $X^n$ induced by the actions of $G^n$ and $\S_n$ on $X^n$. \begin{defin} Define the $\Z\times\Z/2\Z$-graded complex vector space $$\F_G^q(X)=\bigoplus_{n\geq0}q^nK_{G_n}^*(X^n)\otimes\C.$$
\end{defin} Wang in \cite{wang2000} shows that $\F^q_G(X)$ admits a natural $\Z\times\Z/2\Z$-graded Hopf algebra 
structure that we describe in the following lines.

First we recall the induction structure on equivariant K-theory.

\begin{defin}\label{indhaces}
Let $G$ and $H$ be finite groups, and $\alpha:H\rightarrow G$ a group homomorphism. Let $X$ be a $G$-space and $E\xrightarrow{\ p\ }X$ be an $H$-vector bundle over $X$. According to Definition \ref{induction} one can consider the map $G\times_\alpha E\xrightarrow{\ \ \bar{p}\ \ }G\times_\alpha X$. In \cite[Lemma 6]{wang2000} it is proved that the above map carries a natural $G$-vector bundle structure over $X$. Passing to isomorphism classes one can define a map
$$ind_\alpha:K_H^*(X)\rightarrow K_G^*(X).$$
\end{defin}
Now we are ready to define the product. If $\alpha:G_n\times G_m\rightarrow G_{n+m}$ is the natural inclusion, define a multiplication $\cdot$ on $\F_G^q(X)$ by a composition of the induction map and
the Kunneth isomorphism $q$: 
\begin{align*}
K_{G_n}^*(X^n)\otimes\C)\otimes (K_{G_m}^*(X^m)\otimes\C)&\xrightarrow{\ \ q\ \ }
K_{G_n\times G_m}^*(X^{n+m})\otimes\C\\&\xrightarrow{Ind_\alpha}
K_{G_{n+m}}^*(X^{n+m})\otimes\C.
\end{align*}
 We denote by 1 the unit in $K_{G_0}(X^0)\otimes\C\cong\C.$ 

The comultiplication $\Delta$ on $\F_G^q (X)$,is the
composition of the inverse of the Kunneth isomorphism and the restriction from
$G_n$ to $G_k\times G_{n-k}$:
\begin{align*}
K_{G_n}^*(X^n)\otimes\C&\longrightarrow
\bigoplus_{m=0}^nK_{G_m\times G_{n-m}}^*(X^n)\otimes\C\\&\xrightarrow{q^{-1}}
\bigoplus_{m=0}^nK_{G_m}^*(X^m)\otimes K_{G_{n-m}}^*(X^{n-m})\otimes\C.
\end{align*}
We define the counit $\epsilon:\F_G(X)\rightarrow\C$ by sending $K_{G_n}^*(X^n)$ ($n> 0$) to 0 and
$1\in K_{G_0}^*(X^0)\cong\C$ to 1.

\begin{teor}[Thm. 2 in \cite{wang2000}] With the operations defined as above, $\F_G^q(X)$ becomes a Hopf
algebra.
\end{teor}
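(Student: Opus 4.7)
The plan is to verify the graded Hopf algebra axioms one at a time, reducing each to a standard property of induction, restriction, and the Künneth isomorphism in equivariant K-theory. Associativity of the product follows from transitivity of induction along the tower $G_n \times G_m \times G_l \subseteq G_{n+m} \times G_l \subseteq G_{n+m+l}$ (and along the parallel tower passing through $G_n \times G_{m+l}$), combined with associativity of the Künneth map; coassociativity of $\Delta$ is dual, using transitivity of restriction. The unit and counit axioms are routine: $K_{G_0}(X^0) \cong \C$ is generated by $1$, the identifications $G_0 \times G_n = G_n = G_n \times G_0$ make the induction a trivial isomorphism in those slots, and dually only the summands with $m=0$ or $m=n$ in $\Delta$ contribute to the counit axiom.

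The main step is the bialgebra compatibility $\Delta(a \cdot b) = \Delta(a) \cdot \Delta(b)$ (with the appropriate graded-twisted product on the target). Here the composition $\Delta \circ \mu$ contains $\mathrm{Res}_{G_k \times G_{n+m-k}}^{G_{n+m}} \circ \mathrm{Ind}_{G_n \times G_m}^{G_{n+m}}$, which I would expand by the Mackey double coset formula in equivariant K-theory,
$$\mathrm{Res}_H^{G_{n+m}} \circ \mathrm{Ind}_K^{G_{n+m}} \;=\; \sum_{g \in H \backslash G_{n+m} / K} \mathrm{Ind}_{H \cap gKg^{-1}}^H \circ c_g \circ \mathrm{Res}_{g^{-1}Hg \cap K}^K,$$
with $H = G_k \times G_{n+m-k}$ and $K = G_n \times G_m$. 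For the wreath products $G_n = G \wr S_n$ these double cosets are parametrized by $2 \times 2$ nonnegative integer matrices with row sums $(k,n+m-k)$ and column sums $(n,m)$, equivalently by pairs $(i,j)$ with $i \leq n$, $j \leq m$, $i+j=k$; each corresponding intersection computes, up to conjugation by the chosen representative, to $G_i \times G_{n-i} \times G_j \times G_{m-j}$. This matches exactly the combinatorics of the terms appearing in $\Delta(a) \cdot \Delta(b) \in \bigoplus_k K_{G_k}(X^k) \otimes K_{G_{n+m-k}}(X^{n+m-k})$, so matching the Mackey summands against these terms, with the signs absorbed into the $\Z_2$-graded Künneth isomorphism, completes the axiom.

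Finally, $\F_G^q(X)$ is non-negatively graded in $q$ with degree-zero part $K_{G_0}(X^0) \otimes \C \cong \C$, hence is a connected graded bialgebra; an antipode then exists and is unique by the standard recursion $S(x) = -x - \sum S(x_{(1)}) x_{(2)}$ on $x$ of positive degree with reduced coproduct $\sum x_{(1)} \otimes x_{(2)}$. The only genuine obstacle in the whole argument is the Mackey-formula step: one must both have a clean statement of Mackey's formula in equivariant K-theory (most efficiently via the induction/restriction adjunction in $KK$-theory) and carry out the combinatorial analysis of double cosets inside $G \wr S_{n+m}$, which is the substantive content and the place where the Hopf algebra structure genuinely uses the wreath product rather than the mere direct product of the $G^n$.
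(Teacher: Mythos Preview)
The paper does not actually prove this statement: it is quoted as a result from the literature (Wang \cite{wang2000}, Theorem~2, and Segal \cite{segal1996}), and the paper provides no argument beyond the citation. So there is no ``paper's own proof'' to compare against.

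That said, your outline is the correct and standard one, and is essentially what one finds in Wang's paper. The key non-formal step is exactly the one you identify: compatibility of product and coproduct, handled via the Mackey double coset formula for $\mathrm{Res}_{G_k\times G_{n+m-k}}^{G_{n+m}}\circ\mathrm{Ind}_{G_n\times G_m}^{G_{n+m}}$. Your description of the double cosets in $G\wr S_{n+m}$ by $2\times 2$ nonnegative integer matrices with prescribed row and column sums is right (it reduces to the corresponding statement for symmetric groups, since the $G^{n+m}$ factor is normal and contributes trivially to the double coset count), and the intersection subgroups are indeed conjugate to $G_i\times G_{n-i}\times G_j\times G_{m-j}$. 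The antipode via connectedness in the $q$-grading is also the standard move. Nothing substantive is missing from your sketch; it would just need to be fleshed out with the verification that the K\"unneth isomorphism and the conjugation maps $c_g$ interact correctly with the identifications, which is routine.
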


It is possible to describe $\F_G^q(X)$ as a supersymmetric $\Z\times\Z/2\Z$-graded algebra. For this end we will use a version of Chern character in equivariant K-theory given in \cite{a-s1989}.

Note that $K_G^*(pt)\otimes\C$ is
isomorphic to the ring $class_\C(G)$ of class functions on $G$. The bilinear map $\star$ induced
from the tensor product $$K_G^*(pt)\otimes K_G^*(X)\rightarrow K_G^*(X)$$
gives rise to a natural $K_G^*(pt)$-module structure on $K_G^*(X)$. Thus $K_G^*(X)\otimes\C$ naturally
decomposes into a direct sum over the set of conjugacy classes $G_*$ of $G$. The
next theorem \cite{a-s1989} gives a description of each term in the direct sum.
\begin{lema}\label{ascharacter}There is a natural $\Z/2\Z$-graded isomorphism
$$\phi:K_G^*(X)\otimes\C\longrightarrow\bigoplus_{[g]}K^*(X^g/Z_G(g))\otimes\C,$$
where $Z_G(g)$ denotes the centralizer of $g$ in $G$.
\end{lema}
Applying lemma \ref{ascharacter} to each term of $\F_G^q(X)$ one can derive a decomposition theorem. First we recall the notion of $\Z\times\Z/2\Z$-graded supersymmetric algebra.
\begin{defin}
Let $A=A^0\oplus A^1$ be a $\Z\times\Z/2\Z$-graded complex vector space, we define the $\Z\times\Z/2\Z$-graded supersymmetric algebra $\s(A)$ as the tensor product of the symmetric algebra $S(A^0)$ and the exterior algebra $\Lambda(A^1)$.
\end{defin}
\begin{teor}[Thm. 3 in \cite{wang2000}]\label{description} As a $\Z\times\Z/2\Z$-graded Hopf algebra, $\F_G^q(X)$ is isomorphic to the supersymmetric algebra 
$\s(\bigoplus_{n\geq1}q^nK_G^*(X))$.
\end{teor}


\subsection{Hopf spaces}
Note that the configuration space $\c(X,G)$ has a natural structure of Hopf space given by `putting together' the configurations. More formally we have the following product.
\begin{defin}
Given $$F_1\in Hom^*(C_0(X),FR_n(\hu_G))^G$$ and $$F_2\in Hom^*(C_0(X),FR_m(\hu_G))^G,$$ we define 
$$F_1\cdot F_2\in Hom^*(C_0(X),FR_{n+m}(\hu_G))^G$$
$$(F_1\cdot F_2)(f)=F_1(f)\oplus F_2(f),$$
where $\oplus$ is the \textit{external} direct sum of operators, which is the composition 
$$\hu_G\xrightarrow{\ F_1(f)\oplus F_2(f)\ }\hu_G\oplus\hu_G\xrightarrow{\ \zeta\ }\hu_G,$$
where $\zeta$ is a $G$-isomorphism of complete $G$-universes. With this operation $\c(X,G)^G$ 
becomes a homotopy associative Hopf space with unit.
\end{defin}

When a Hopf space $Y$ is pathwise connected we have a way to relate the Hopf algebra $H_*(Y;\C)$ with the $\Z$-graded complex vector space $$\pi_*(Y;\C)=\bigoplus_{n\geq0}\pi_i(Y;\C)$$ given by a theorem due to Milnor and Moore. Recall that $\widehat{(-) }$ denotes the completion with respect to the $I$-adic topology, when $I$ is the augmentation ideal i.e 
$I=\Ker(\epsilon)$ where $\epsilon$ is the counit of the Hopf algebra $\s(\pi_*(Y;\C))$. (For an explanation on completions see 
  \cite[Chapter 10]{a69}).
  \begin{teor}[Thm. of the Appendix in \cite{mm65}]\label{mm}
  If $Y$ is a pathwise connected homotopy associative Hopf space with unit, and 
  $\lambda:\pi_*(Y;\C)\rightarrow H_*(Y;\C)$ is the Hurewicz morphism viewed as a morphism of $\Z$-graded Lie algebras, then the induced 
  morphism $\bar{\lambda}:\widehat{\s}(\pi(Y;\C))\rightarrow H_*(Y;\C)$ is an isomorphism of Hopf algebras.
  \end{teor}

In order to consider the case when $Y$ is not pathwise connected we need to introduce a theorem due to Cartier. 

Let $A$ be a $\Z$-graded Hopf algebra. Let $\mathfrak{g}$ be the set of primitive elements, these are elements $x$ in $A$
such that
$$\Delta(x)=x\otimes1+1\otimes x\text{, }\epsilon(x)=0.$$
Then $\mathfrak{g}$ is a Lie algebra with the bracket $[x,y]=xy-yx$, and we can consider
its enveloping algebra $U(\mathfrak{g})$ viewed as a Hopf algebra.
Let $\Gamma$ be the set of `group-like' elements, that is the elements $g$ in $A$ such
that
$$\Delta(g)=g\otimes g\text{, }\epsilon(g)=1.$$
For the multiplication in A, the elements of $\Gamma$ form a group, where the inverse
of $g$ is $S(g)$ (here $S$ is the antipode in $A$). We can consider the group
algebra $\C\Gamma$ viewed as a Hopf algebra.
Furthermore for $x$ in $\mathfrak{g}$ and $g$ in $\Gamma$, we have that ${ }^gx=gxg^{-1}$ belongs
to $\mathfrak{g}$. Hence the group $\Gamma$ acts on the Lie algebra $\mathfrak{g}$ by conjugation and therefore on its enveloping algebra $U(\mathfrak{g})$. We define the twisted tensor product $\Gamma\ltimes U(\mathfrak{g})$ as the
tensor product $U(\mathfrak{g})\otimes \C\Gamma$ with the multiplication given by
$$(u\otimes g)\cdot(u'\otimes g')=u\cdot { }^gu'\otimes gg'.$$
\begin{teor}\cite[Thm. 3.8.2]{cartier2007}\label{cartier}
Assume that $A$ is a cocommutative $\Z$-graded Hopf algebra. Let
$\mathfrak{g}$ be the space of primitive elements, and $\Gamma$ the group of `group-like' elements
in $A$. Then there is an isomorphism of $\Gamma\ltimes U(\mathfrak{g})$ onto $A$ as $\Z$-graded Hopf algebras,
inducing the identity on $\Gamma$ and $\mathfrak{g}$.
\end{teor} 

\subsection{The homology of the $G$-fixed points of the configuration space}

 Now we can consider $H_*(\c(X,G)^G;\C)$ with the product induced by the 
Hopf-space structure of $\c(X,G)^G$.
\begin{prop}\label{iso1}Let $X$ be a finite $G$-CW-complex, if $X$ is $G$-connected we have an isomorphism 
 $$H_*(\c(X,G)^G;\C)\cong \widehat{{\s}}(\widetilde{k}_*^G(X)\otimes\C).$$
 
For an arbitrary $X$ we have an isomorphism
 $$H_*(\Omega\c(\Sigma X,G)^G;\C)\cong \widehat{{\s}}(\widetilde{k}_*^G(X)\otimes\C).$$
\end{prop}
\begin{proof}
First suppose that $X$ is $G$-connected. By Theorem \ref{mm} there is an
isomorphism of $\Z$-graded Hopf algebras $$H_*(\c(X,G)^G;\C)\cong\widehat{{\s}}(\pi_*(\c(X,G)^G;\C)),$$
and Theorem \ref{equivalencia} gives the desired isomorphism
$$\widehat{{\s}}(\pi_*(\c(X,G)^G;\C))\cong\widehat{{\s}}(\widetilde{k}_*^G(X)\otimes\C).$$

For a general $X$ according to the definition of our theory for a finite $G$-CW-complex we have that $\Omega(\c(\Sigma X,G))$ is the configuration space in this case. The space $\Omega(\c(\Sigma X,G))^G$ is a non-connected Hopf space.
To describe its homology we will use Theorem \ref{cartier}. By the grading we have that the group-like elements  in $H_*(\Omega(\c(\Sigma X,G))^G;\C)$ correspond with $H_0(\Omega(\c(\Sigma X,G))^G;\C)$ and by Milnor-Moore 
\cite{mm65} the primitive elements correspond to $\pi_*(\Omega(\c(\Sigma X,G))^G_0)\otimes\C$, where $\Omega(\c(\Sigma X,G))^G_0$ is the connected component of the identity. By Theorem \ref{cartier} we have an isomorphism of $\Z\times\Z/2\Z$-graded Hopf algebras
$$H_*(\Omega(\c(\Sigma X,G))^G;\C)\cong H_0(\Omega(\c(\Sigma X,G))^G;\C)\ltimes U(\pi_*(\Omega(\c(\Sigma X,G))^G_0)\otimes\C).$$
As $H_0(\Omega(\c(\Sigma X,G))^G)$ and $\pi_*(\Omega(\c(\Sigma X,G))^G_0)$ are abelian and the action is by conjugation, this action of $H_0(\Omega(\c(\Sigma X,G))^G;\C)$ in $\pi_*(\Omega(\c(\Sigma X,G))^G_0)$ is trivial. Then we have an isomorphism of $\Z\times\Z/2\Z$-graded Hopf algebras
\begin{align*}
H_*(\Omega(\c(\Sigma X,G))^G;\C)&\cong H_0(\Omega(\c(\Sigma X,G))^G;\C)\otimes U(\pi_*(\Omega(\c(\Sigma X,G))^G_0)\otimes\C)\\
&\cong H_0(\Omega(\c(\Sigma X,G))^G;\C)\otimes\widehat{\s}(\pi_*(\Omega(\c(\Sigma X,G))^G_0)\otimes\C)\\
&\cong \widetilde{k}_0^G(X)\otimes \widehat{\s}(\widetilde{k}_*^G(X)\otimes\C)\\
&\cong \widehat{\s}(\widetilde{k}_*^G(X)\otimes\C).\end{align*}
\end{proof}
In order to relate the homology of the $G$-fixed point space of $\c(X,G)$ with $\F_G^q(X)$ we need to use an equivariant version of Chern character for equivariant  homology theories due to L\"uck in \cite{lu2002}.
We will apply this Chern character to $k_*^?$.
 
For a subgroup $H\subseteq G$ we denote by $N_G(H)$ the normalizer of $H$ in $G$. Let $H\cdot Z_G(H)$ be the subgroup of $N_G(H)$ consisting of elements of the form $hc$ for
$h\in H$ and $c\in Z_G(H)$. Denote by $W_G(H)$ the quotient $N_G(H)/H\cdot Z_G(H)$.

Taking characters yields an isomorphism of rings 
$$\chi:R_\C(H)\otimes_\Z\C\xrightarrow{\ \cong\ }class_\C(H).$$
We denote by $H_*^G(-;\C)$ the Bredon homology with coefficients in the complex class function ring. 
Given a finite cyclic group $C$, there is the idempotent $\theta^C_C\in class_\C(C)$ which assigns 1 to a generator of $C$ and 0 to the 
other elements. This element acts on $H_n^G(\ast;\C)=class_\C(G)$. The image $im(\theta_C^C:H_n^G(\ast;\C)\rightarrow H_n^G(\ast;\C))$ of the map given by 
multiplication with the idempotent $\theta_C^C$ is a term of the direct sum in $H_n^G(\ast;\C)$ and will be denoted by $\theta_C^C\cdot H_n^G(\ast;\C)$.

\begin{teor}[Thm.0.3 in \cite{lu2002}]\label{chern} Given an equivariant homology theory $\H_*^?$ with coefficients in the complex 
class function ring, for any group $G$ and any $G$-CW-complex $X$, let $J$ be the set of conjugacy classes $(C)$ of finite cyclic 
subgroups $C$ of $G$. Then there is an isomorphism of homology theories
$$ch_*^?:\mathscr{BH}_*^?\xrightarrow{\ \cong\ }\H_*^?$$
such that
\begin{multline*}\mathscr{BH}_*^G(X;\C) = \\
\bigoplus_{p+q=n}\bigoplus_{(C)\in J}H_p(X^C/Z_G(C);\C)\otimes_{\C[W_G(C)]}
im(\theta_C^C:\H_q^C(\ast;\C)\rightarrow\H_q^C(\ast;\C)).\end{multline*}
\end{teor}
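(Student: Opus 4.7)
The plan is to follow the standard strategy for constructing rationalized Chern characters for equivariant homology theories, as developed by Lück. First I would verify that the functor $\mathscr{BH}_*^?$ defined by the displayed formula is itself a proper equivariant homology theory. For each conjugacy class $(C) \in J$ this reduces to observing that $H_p((X^C, A^C)/C_G(C); \C)$ is a $G$-homology theory in the pair $(X,A)$ (since the fixed-point functor preserves cofibrations of proper $G$-CW-pairs and the Borel construction is exact in rational coefficients), while tensoring over $\C[W_G(C)]$ with the $\C$-vector space $\mathrm{im}(\theta_C^C \cdot \H_q^C(*; \C))$ is an exact operation. The $G$-homotopy invariance and wedge axioms are formal, while the induction structure comes from the functoriality in the group variable of the fixed-point construction and compatibility of the idempotents $\theta_C^C$ with group homomorphisms.

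Second, I would construct the natural transformation $ch_*^?: \mathscr{BH}_*^? \to \H_*^?$ by assembling, for each finite cyclic $C \subseteq G$, a map that sends a class in $H_p(X^C/C_G(C); \C) \otimes_{\C[W_G(C)]} \theta_C^C \H_q^C(*; \C)$ into $\H_{p+q}^G(X)$. The natural candidate is built by combining: the inclusion $X^C \hookrightarrow X$, the restriction from $G$ to $C_G(C)$ followed by induction back to $G$, and the slant-style pairing of an ordinary homology class with a coefficient class via the module structure $K_C(*) \otimes \H_*^C(X^C) \to \H_*^C(X^C)$. The idempotent $\theta_C^C$ plays the role of extracting the summand where $C$ acts by a generator.

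The critical step is checking that $ch_*^?$ is an isomorphism on orbits $G/H$ for finite subgroups $H$, after which the general case follows from the comparison theorem for equivariant homology theories (Mayer–Vietoris, long exact sequences, and passage to colimits over skeleta). By compatibility with induction and conjugation, this reduces to the case $H = G$ with $X = *$, i.e.\ to showing that $\H_n^G(*) \otimes \C$ decomposes canonically as $\bigoplus_{(C) \in J} \theta_C^C \H_n^C(*; \C) \otimes_{\C[W_G(C)]} \C[G/C_G(C)]$. This is where the Atiyah–Segal type decomposition enters: the element $\sum_{(C)} \theta_C^C$ equals the identity in $\mathrm{class}_\Q(H) \cong R_\Q(H) \otimes \Q$, and the resulting orthogonal idempotent decomposition of the coefficient system is transported along the module structure.

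The main obstacle is precisely this orbit computation: one must identify the action of the idempotents $\theta_C^C \in \mathrm{class}_\Q(C)$ on $\H_q^C(*;\C)$ in a way that is compatible both with the restriction maps $\H_q^G(*) \to \H_q^C(*)$ and with the induction structure, so that summing over $(C) \in J$ yields exactly $\H_n^G(*;\C)$ with no repetitions. Once this verification is in place for every cyclic subgroup of every finite group, the passage from orbits to general proper $G$-CW-pairs is standard homological algebra using the axioms verified in the first step.
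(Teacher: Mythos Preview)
The paper does not prove this theorem at all: it is quoted verbatim as \cite[Thm.~0.3]{lu2002} and used as a black box to derive Theorem~\ref{4.1}. There is therefore no ``paper's own proof'' to compare your proposal against; the author simply imports L\"uck's result and immediately applies it to the theories $k_*^?$ and $K_*^?$.

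Your outline is a reasonable high-level sketch of how L\"uck's argument actually runs in \cite{lu2002}: one first shows the right-hand side is itself a proper equivariant homology theory, then builds the Chern map using restriction, induction, and the module structure coming from the representation ring, and finally reduces to the orbit case where the Artin-type idempotent decomposition $\sum_{(C)} \theta_C^C = 1$ in $R_\Q(H)\otimes\Q$ does the work. If you intend this as a summary of the cited proof rather than an original contribution, it is broadly accurate, though the actual verification on orbits in L\"uck's paper is somewhat more delicate than you indicate (it goes through an explicit computation with Mackey functors and double-coset formulas rather than a direct Atiyah--Segal appeal). For the purposes of \emph{this} paper, however, no proof is expected here; you should simply cite the result as the author does.
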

Using the above theorem we obtain the following result.
\begin{teor}\label{4.1} Let $X$ be a $G$-CW-complex. There is a natural isomorphism of $\Z$-graded complex vector spaces
$${k}_*^G(X)\otimes\C\cong {H}_*^G(X;\C)\otimes\C[q] \cong {K}_*^G(X)\otimes\C[q]. $$
\end{teor}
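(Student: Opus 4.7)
The plan is to apply L\"uck's equivariant Chern character (Theorem \ref{chern}) to the equivariant homology theory $\ku_*^?$ constructed in the previous section. This reduces the computation of $\ku_*^G(X)\otimes\C$ to a direct sum indexed by conjugacy classes of finite cyclic subgroups $C\leq G$, each summand governed by the coefficient module $\ku_*^C(*;\C)$ and the Artin-type idempotent $\theta_C^C$.

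First I would compute $\theta_C^C\cdot\ku_q^C(*;\C)$ for each finite cyclic $C$. By Theorem \ref{equivalencia} combined with the standard description of connective K-theory, $\ku_q^C(*)$ equals $R(C)$ in each even nonnegative degree (as a free $R(C)$-module generated by powers of the Bott element) and vanishes in odd or negative degrees. Tensoring with $\C$ identifies $R(C)\otimes\C$ with the ring of complex class functions on $C$, and the idempotent $\theta_C^C$ projects onto the one-dimensional subspace of class functions supported on the generators of $C$. Collecting these contributions across degrees yields
$$\bigoplus_{q\geq 0}\theta_C^C\cdot\ku_q^C(*;\C)\cong\C[q],$$
where the formal variable $q$ has homological degree $2$ and corresponds to the Bott class.

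Plugging this computation into Theorem \ref{chern} gives
$$\ku_n^G(X)\otimes\C\cong\bigoplus_{p+2r=n}\bigoplus_{(C)\in J}H_p(X^C/C_G(C);\C)\otimes_{\C[W_G(C)]}\C\cdot q^{r},$$
which regroups as $\bigl(\bigoplus_{(C)\in J}H_*(X^C/C_G(C);\C)\bigr)\otimes\C[q]$. Applying the same Chern character to the ordinary equivariant homology $\widetilde{H}_*^?(-;\C)$, whose coefficients are concentrated in degree zero, identifies the bracketed factor with $\widetilde{H}_*^G(X;\C)$ and delivers the first displayed isomorphism. For the second, Theorem \ref{chern} applied to $\widetilde{K}_*^?$ produces analogously $\widetilde{K}_*^G(X)\otimes\C\cong\widetilde{H}_*^G(X;\C)\otimes\C[\beta,\beta^{-1}]$, and comparing the two decompositions through the natural embedding $\C[q]\hookrightarrow\C[\beta,\beta^{-1}]$ that sends the connective Bott class $q$ to $\beta$ produces the stated identification with $\widetilde{K}_*^G(X)\otimes\C[q]$.

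The principal technical step, and the one I expect to be the main obstacle, is the coefficient calculation $\theta_C^C\cdot\ku_q^C(*;\C)\cong\C$ in each even nonnegative degree. This requires invoking Theorem \ref{equivalencia} to identify $\pi_q^C(\c(*,C))$ with the degree-$q$ piece of $R(C)[\beta]$, and then verifying that the class-function idempotent $\theta_C^C$ cleanly extracts a one-dimensional piece in every nonnegative even degree while killing the odd and negative degrees. Once this coefficient information is in hand, the remaining assembly of the Chern character formula and its comparison with the $\widetilde H_*^G$- and $\widetilde K_*^G$-versions is purely formal.
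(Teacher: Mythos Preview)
Your proposal is correct and follows essentially the same route as the paper: apply L\"uck's Chern character (Theorem \ref{chern}) to $\ku_*^?$, compute that the coefficient piece $\theta_C^C\cdot\ku_q^C(*;\C)$ is one-dimensional in each nonnegative even degree and zero otherwise, and then reassemble to obtain $\ku_*^G(X)\otimes\C\cong\widetilde H_*^G(X;\C)\otimes\C[q]$. The paper's proof is terser on the coefficient step (it simply asserts $im(\theta_C^C)\cong k_q^{e}(*)\otimes\C\cong\C$), so your more explicit identification of $\ku_q^C(*)$ with $R(C)$ in even nonnegative degrees via Theorem \ref{equivalencia} is a welcome elaboration.

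One small point: your handling of the second isomorphism is more roundabout than necessary. You pass through $\widetilde K_*^G(X)\otimes\C\cong\widetilde H_*^G(X;\C)\otimes\C[\beta,\beta^{-1}]$ and then invoke an embedding $\C[q]\hookrightarrow\C[\beta,\beta^{-1}]$, but that embedding does not by itself produce the target statement $\widetilde H_*^G(X;\C)\otimes\C[q]\cong\widetilde K_*^G(X)\otimes\C[q]$. The paper's argument is simpler and cleaner: apply Theorem \ref{chern} to $K_*^?$ exactly as you did for $\ku_*^?$; the only difference is that $K_q^C(*)\otimes\C$ is now nonzero for \emph{all} even $q$, so the Chern character yields the $\Z/2$-graded identification $\widetilde K_*^G(X)\otimes\C\cong\widetilde H_*^G(X;\C)$, and tensoring both sides with $\C[q]$ gives the result directly. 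This is a cosmetic simplification, not a gap in your argument.
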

\begin{proof}
In the case of $k_*^G$, $im(\theta_C^C)=k_q^{e}(pt)\otimes\C\cong\C$, then the Chern character reduces to
$$k_n^G(X)\otimes\C\cong H_n^G(X;\C)\oplus H_{n-2}^G(X;\C)\oplus\cdots.$$
Taking the sum over $n\in\N$ we obtain a graded complex vector space isomorphism:
$$k_*^G(X)\otimes\C\cong H_*^G(X;\C)\otimes\C[q] \cong K_*^G(X)\otimes\C[q]. $$
where the last isomorphism is obtained using Theorem \ref{chern} applied to the equivariant homology theory $K_*^?$ in a similar way as 
we do for $k_*^?$.
\end{proof}

Finally we find an isomorphism from $H_*(\C(X,G);\C)$ to $\widehat{\F_G^q(X)}$ when $X$ is an even dimensional $G$-$\spinc$-manifold. First we recall \textit{Poincar\'e duality} for equivariant K-theory.

  \begin{teor}\label{poinca}
  Let $M$ be a $n$-dimensional $G$-$\spinc$-manifold. Then there exists an isomorphism
  $$D:K_G^*(M_+)\longrightarrow K^G_{n-*}(M).$$
  \end{teor}

Applying Theorem \ref{poinca} and Theorem \ref{cartier} we can obtain the main result of the section.
\begin{teor}
Let $M$ be a even dimensional $G$-$\spinc$-manifold. We have an isomorphism of $\Z$-graded Hopf algebras
$$H_*(\c(M,G)^G;\C)\cong\widehat{\mathfrak{F}_G^q(M)}.$$
\end{teor}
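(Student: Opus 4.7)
The plan is to assemble the identification out of the structural results already proved, so that very little new work is needed. By Proposition \ref{iso1}, the left-hand side $H_*(\c(X,G)^G;\C)$ is isomorphic as a graded Hopf algebra to the completed supersymmetric algebra $\widehat{\s}(\widetilde{k}_*^G(X)\otimes\C)$ on the complexified equivariant connective K-homology of $X$. On the right-hand side, Theorem \ref{description} (Wang) supplies an isomorphism of graded algebras $\F_G^q(X)\cong\s(\bigoplus_{n\geq1}q^n K_G(X))$. The whole proof therefore reduces to identifying the generator spaces $\widetilde{k}_*^G(X)\otimes\C$ and $\bigoplus_{n\geq1}q^n K_G(X)$ in a way that matches the $\Z_2$-grading that separates the symmetric and exterior factors.

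To pass between the two sides, I would first apply Theorem \ref{4.1}, which rewrites $\widetilde{k}_*^G(X)\otimes\C\cong\widetilde{K}_*^G(X)\otimes\C[q]$; the formal variable $q$ of Wang's Hopf algebra is exactly the bookkeeping variable that tracks the extra polynomial degree arising beyond the $\Z_2$-grading of K-theory. Next, the hypothesis that $X$ is an even-dimensional $G$-$\spinc$-manifold is used to invoke Poincar\'e duality (Theorem \ref{poinca}): the isomorphism $K_G^*(X_+)\cong K^G_{n-*}(X)$ preserves the $\Z_2$-grading since $n$ is even, so $\widetilde{K}_0^G(X)\cong \widetilde{K}_G^0(X)$ and $\widetilde{K}_1^G(X)\cong \widetilde{K}_G^1(X)$. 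Composing these identifications produces a natural isomorphism between $\widetilde{k}_*^G(X)\otimes\C$ and $\bigoplus_{n\geq1}q^n K_G(X)$, compatibly with parity.

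Finally, one has to check that the isomorphism so obtained is actually an isomorphism of algebras, not just of graded vector spaces. The Pontryagin product on $H_*(\c(X,G)^G;\C)$ is induced by the Hopf-space operation on $\c(X,G)^G$ defined via the internal direct sum of orthogonal configurations; under the Milnor--Moore isomorphism of Theorem \ref{mm} it becomes the free (super)symmetric multiplication on $\widetilde{k}_*^G(X)\otimes\C$. Similarly, the induction product defining $\F_G^q(X)$ becomes, under Wang's Theorem \ref{description}, the free (super)symmetric multiplication on $\bigoplus_{n\geq1}q^n K_G(X)$. Matching the two multiplications therefore reduces to matching generators, which has already been done.

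The main obstacle is keeping the three independent gradings straight (the homological degree on the configuration-space side, the $\Z_2$-grading of equivariant K-theory, and the Wang grading by $q$) and ensuring that the combined isomorphism behaves correctly with respect to completion: the $\widehat{\s}$ on the Milnor--Moore side is with respect to the augmentation ideal, and one needs the filtration by $q$-degree on $\F_G^q(X)$ to agree with the induced filtration so that the graded-algebra identification falls out. The evenness of $\dim X$ is essential here, because it is the only way to guarantee that Poincar\'e duality aligns the symmetric generators (from $K^0$) on both sides with each other, and likewise for the exterior generators (from $K^1$); an odd-dimensional $\spinc$-manifold would swap the two types of generators and break the algebra isomorphism.
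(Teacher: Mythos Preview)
Your proposal is correct and follows essentially the same route as the paper's own proof: both assemble the isomorphism from Proposition~\ref{iso1}, Theorem~\ref{4.1}, Poincar\'e duality (Theorem~\ref{poinca}), and Wang's Theorem~\ref{description}, in the same order. You are more explicit than the paper about why the algebra structures match and about the role of even-dimensionality in aligning the $K^0$/$K^1$ parities under duality, and you rightly flag the completion issue (the paper's left-hand side is $\widehat{\s}$ via Milnor--Moore while Theorem~\ref{description} gives only $\s$), which the paper glosses over.
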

\begin{proof}
As $M$ is  a $G$-$\spinc$ manifold we can use Theorem 
\ref{poinca}
and obtain the following isomorphism of $\Z\times\Z/2\Z$-graded Hopf algebras 
$$\widehat{{\s}}(\widetilde{k}_*^G(M)\otimes\C)\cong \widehat{{\s}}(\bigoplus_{n\geq1}q^n\widetilde{K}_*^G(M)\otimes\C)\cong \widehat{{\s}}(\bigoplus_{n\geq1}q^n\widetilde{K}_G(M_+))\cong 
\widehat{{\s}}(\bigoplus_{n\geq1}q^nK_G(M)).$$
Combining Proposition \ref{iso1}, Theorem \ref{4.1}  and Theorem \ref{description} we obtain
$$H_*(\c(M,G)^G;\C)\cong\widehat{\mathfrak{F}_G^q(M)}.$$
\end{proof}
\begin{ejem}
For $X=S^0$ we have $$\Omega(\c(\Sigma(S^0),G))\simeq BU_G.$$ Applying the above discussion to this H-space we conclude that
\begin{align*}
H_*((BU_G)^G;\C)\cong &R(G)\otimes\widehat{\s}(\pi_*((BU_G)^G)\otimes\C)\\
                             \cong& R(G)\otimes\widehat{\s}(\bigoplus_{n\geq0}R(G_n)\otimes\C)\\
                             \cong& \widehat{\s}(\bigoplus_{n\geq0}R(G_n)\otimes\C).
\end{align*}
Summarizing, we have an isomorphism 

 $$H_*((BU_G)^G;\C)\cong\widehat{\F_G^q(S^0)}=\widehat{\s}(\bigoplus_{n\geq0}R(G_n)\otimes\C).$$
\end{ejem}
We also have 
$$H_*((BU_G)^G;\C)\cong\widehat{\s}(\bigoplus_{n\geq0}R(G_n)\otimes\C)\cong\C[[\sigma_1^1,\ldots,\sigma_1^{k_1},\sigma_2^1,\ldots]]$$
where $\{\sigma_i^1,\cdots,\sigma_i^{k_i}\}$ is a complete set of non isomorphic irreducible representations of $G_i$. We expect
that the elements $\sigma_i^k$ correspond in some sense with duals of $G$-equivariant Chern classes.
\section{Acknowledgments}
The author would like to express his gratitude to Prof. Bernardo Uribe and Prof. Graeme Segal who read the 
manuscript and made a number of important suggestions.  The author also thanks the referee for his comments and suggestions which helped to improve the presentation of this paper
\bibliographystyle{alpha}
\bibliography{phd}

\end{document}